
\documentclass[12pt]{article}
\usepackage[english]{babel}
\usepackage{graphicx}
\usepackage{amsmath,amsthm,amssymb}
\usepackage{times}
\usepackage{enumerate}
\usepackage{amsmath}
\usepackage{amsfonts}
\usepackage{pdfsync}
\usepackage{cleveref}
\usepackage{autonum}
\usepackage{enumitem}
\usepackage[usenames, dvipsnames]{color}


\newcommand{\R}{{\mathbb R}}
\newcommand{\N}{{\mathbb N}}
\renewcommand{\L}{\Lambda}

\DeclareMathOperator{\Dom}{Dom}

\DeclareMathOperator{\Lip}{Lip}
\pagestyle{myheadings}
\def\titlerunning#1{\gdef\titrun{#1}}
\makeatletter
\def\author#1{\gdef\autrun{\def\and{\unskip, }#1}\gdef\@author{#1}}
\def\address#1{{\def\and{\\}\renewcommand{\thefootnote}{}%
\footnote {#1}}%
\markboth{\autrun}{\titrun}}
\makeatother
\def\email#1{e-mail: #1}
\def\subjclass#1{{\renewcommand{\thefootnote}{}%
\footnote{\emph{Mathematics Subject Classification (2010):} #1}}}
\def\keywords#1{\par\medskip
\noindent\textbf{Keywords.} #1}


\theoremstyle{definition}
\newtheorem{theorem}{Theorem}[section]
\newtheorem{corollary}[theorem]{Corollary}
\newtheorem{lemma}[theorem]{Lemma}

\newtheorem*{theorem*}{Theorem}
\usepackage{amsthm}



\theoremstyle{definition}
\newtheorem{definition}[theorem]{Definition}
\newtheorem{remark}[theorem]{Remark}
\newtheorem{example}[theorem]{Example}

\newtheorem*{hypothesisH1'}{Hypothesis $\mathbf{(H_2')}$}
\newtheorem*{hypothesisH1''}{Hypothesis ($\mathcal S^{\infty}_{x_*}$)}
\newtheorem*{hypothesisS'}{Hypothesis ($\mathcal S^{\infty}_{x_*}$)}
\newtheorem*{hypothesisH'}{Hypothesis ($\mathcal A^{\infty}_{x_*}$)}

\newtheorem*{problemP'}{Problem \textbf{(P$'$)}}

\newtheorem*{conditionS}{Condition (S)}

\newtheorem*{basicass}{Basic Assumptions}


\numberwithin{equation}{section}

%



\begin{document}
\titlerunning{Non-occurrence of gap}
\title{Non-occurrence of gap for one-dimensional non autonomous functionals}
\author{
 Carlo Mariconda \&
}
\maketitle
\date
\address{
Carlo Mariconda (corresponding author), ORCID $0000-0002-8215-9394$),  Universit\`a
degli Studi di Padova,  Dipartimento di Matematica  ``Tullio Levi-Civita'',   Via Trieste 63, 35121 Padova, Italy;  \email{carlo.mariconda@unipd.it}}
\subjclass{49, 49J45, 49N60}
\keywords{Regularity, Lipschitz, minimizing sequence, approximation
}
\begin{abstract}
Let $F(y):=\displaystyle\int_t^TL(s, y(s), y'(s))\,ds$ be a positive functional, unnecessarily autonomous,  defined on the space $W^{1,p}([t,T]; \R^n)$ ($p\ge 1$)  of Sobolev functions, possibly with prescribed one or two end point conditions.
It is important, especially for the applications,  to be able to approximate the infimum of $F$ with the values of $F$ along a sequence of Lipschitz functions satisfying the same boundary condition(s). Sometimes this is not possible, i.e., the so called Lavrentiev phenomenon occurs. This is the case of the innocent like Manià's Lagrangian $L(s,y,y')=(y^3-s)^2(y')^6$ and boundary data $y(0)=0, y(1)=1$;  nevertheless in this situation the gap does not occur with just the end point condition $y(1)=1$. The paper focuses about the different set of conditions needed to avoid the gap for problems with just one or with both end point conditions.
Under minimal assumptions on the, possibly extended valued, Lagrangian we ensure the non-occurrence of the Lavrentiev phenomenon with just one end point condition.
We introduce an additional hypothesis, satisfied when the Lagrangian is bounded on bounded sets,  in order to ensure the validity of both end point conditions $y_h(t)=y(t), y_h(T)=y(T)$; the result gives some new light even in the autonomous case.
\end{abstract}
\section{Introduction}
\subsubsection*{The state of the art}
We consider here a one-dimensional, vectorial functional of the calculus of variations
\[F(y)=\int_t^TL(s, y(s), y'(s))\,ds\]
defined on the space of Sobolev functions $W^{1,p}(I, \R^n)$ on $I:=[t,T]$ with values in $\R^n$, for some $p\ge 1$. In the paper the {\em Lagrangian} $L(s,y,u)$ is Borel and is assumed to have values in $[0, +\infty[\cup\{+\infty\}$. Following the terminology of control theory we will refer to $s$ as to the {\em time} variable,  to $y$ as to the {\em state} variable and to $u$ as to the {\em velocity} variable.
The space of absolutely continuous functions provides the correct framework in order to find a minimizer of $F$: Tonelli's Theorem ensures its existence for any given boundary data when $L(s, \cdot, \cdot)$ is lower semicontinuous, $L$ has a superlinear growth and $L(s,y,\cdot)$ is convex. In order to approximate the value of the infimum of $F$ (e.g., by means of numerical methods), one may be tempted to approximate in the space of absolutely continuous functions, a given minimizer $y\in W^{1,p}(I; \R^n)$ with a  sequence of Lipschitz functions $(y_h)_h$ sharing the same boundary data of $y$, in such a way that $\displaystyle\lim_h F(y_h)=F(y)$. It turns out however, that this may not be possible: we say that in this case that the {\em Lavrentiev phenomenon} occurs. This is the case, for instance, for the innocent-looking Manià's \cite{Mania} problem
\[\min F(y):=\int_0^1(y^3-s)^2(y')^6\,ds:\, y\in W^{1,1}(I),\, y(0)=0,\, y(1)=1\]
described in Example~\ref{ex:Mania1}: clearly the minimum is obtained for $y(s):=s^{1/3}$, however it can be shown that there is $\varepsilon>0$ such that $F(z)>\varepsilon$ whenever $z:[0,1]\to\R$ is Lipschitz  and $z(0)=y(0)=0, z(1)=y(1)=1$.
As it is pointed out in \cite{GBH}, it is interesting to note that things change drastically if one allows the approximating sequence to have a different initial datum: indeed the sequence
\[y_h(s):=\begin{cases}{1}/{h^{1/3}}&\text{ if } s\in [0, 1/h],\\
t^{1/3}&\text{ otherwise}, \end{cases}\]
converges to $y$ and $F(y_h)\to F(y)$.
Another celebrated example, due to  Ball and  Mizel,  is provided with a polynomial Lagrangian in $(s,y,u)$ that satisfies Tonelli's existence conditions.

 When a minimizer exists, a way to exclude the Lavrentiev phenomenon is to provide conditions that ensure the {\em Lipschitz continuity of the minimizer} itself. In the  {\em autonomous} case it turns out, starting from the work \cite{CVTrans}  of Clarke and Vinter, up to its refinement  by  Dal Maso and Frankowska \cite{DMF},  that the hypotheses of Tonelli's existence theorem (even without convexity) provide Lipschitzianity of the minimizers. Actually weaker growth conditions than superlinearity ensure both existence and Lipschitzianity: we refer to the pioneer work \cite{Clarke1993} of F. Clarke and to the subsequent papers of  Cellina and his school (see \cite{CTZ, Cellina, MTLip}).
In the {\em non autonomous} case, there are functionals satisfying Tonelli's condition whose minimizers are not Lipschitz or worse, that exhibit the {\em Lavrentiev phenomenon}. {\em Condition }(S), an additional local Lipschitz continuity assumption on the first ``time'' variable of $\L$ (thus always satisfied in the autonomous case), formulated in \S~\ref{sect:SS}, is enough to ensure Lipschitzianity: this fact was established by Clarke in \cite{Clarke1993} and generalized by Bettiol and Mariconda in \cite{BM2}.  Property (S) is known to be a sufficient condition  for the validity of the Du Bois-Reymond equation; we refer to \cite{Cesari} for the smooth case,  to \cite{Clarke1993} for the nonsmooth convex case under weak growth assumptions,  to \cite{BM1, BM2} by Bettiol and the author in the general case.

Many  functionals arising from applications do not fulfill, however,  known existence criteria and the previous regularity approach may thus not be pursued. In this case, looking for the {\em non-occurrence of the Lavrentiev phenomenon} is  even more challenging and interesting. This is easy if one requires some ``natural growth assumptions'' from above and continuous Lagrangians; our interest is therefore devoted to Lagrangians that possibly violate these assumption.\\
Here again, the {\em autonomous case} stands on its own: Alberti and  Serra Cassano in  \cite[Theorem 2.4]{ASC} state that, if $L(s,y,u)=\L(y,u)$ is just {\em Borel} and satisfies
\begin{equation}
\tag{${\rm B}_{\L}$}\forall K>0\,\,\,\exists \nu_0>0\qquad \L\text{ is bounded  on } B_K\times B_{\nu_0}
\end{equation}
then, given $y\in W^{1,p}(I,\R^n)$ such that $\L(y,y')\in L^1(I;\R^n)$ there is  a sequence $(y_h)_h$ of Lipschitz functions converging to $y$ in $W^{1,p}(I;\R^n)$ and such that $I(y_h)\to I(y)$ and $y_h(t)=y(t)$ for all $h$: we say in this case that there is no {\em Lavrentiev gap at $y$} (here and below $B_r$ denotes the closed ball of center the origin and radius $r$ in $\R^n$) for the initial prescribed datum. The violation of Assumption (B$_{\L}$) may lead to the occurrence of the Lavrentiev phenomenon, as shown in Example~\ref{ex:alberti2}.
Though \cite[Remark 2.8]{ASC} conjectures that the result does still hold for the variational problem with {\em both}  end point conditions, the proof of \cite[Theorem 2.4]{ASC} actually holds  for just {\em one} end point condition: Example~\ref{ex:alberti} (Alberti, personal communication), shows that this is not just a  technical issue.  The importance of the boundary datum, and the difficulty of preserving it, was noticed exploring the recent literature also in  the multidimensional case by Bousquet, Treu and Mariconda in \cite{BMT, MT2020, MT2020Open}.\\
In the {\em non autonomous} case the examples (see Manià's) show that some additional conditions have to be added: To the author's knowledge, starting from Lavrentiev himself in \cite{Lavrentiev}, most of the  criteria for the avoidance of the Lavrentiev phenomenon  require that $\L$ is locally Lipschitz or H\"older continuous in the state variable (see \cite{Loewen, TZ, Zas}). As was pointed out by Carlson in  \cite{Carl}, many of them can actually    be obtained as a consequence of   a property introduced by L. Cesari and T.S. Angell in \cite{CeAng}. There are however few exceptions:  the non-occurrence of the Lavrentiev phenomenon was established for {\em two} end point conditions without the above regularity hypotheses on the state variable by:
\begin{itemize}[leftmargin=*]
\item Cellina, Ferriero and Marchini in \cite{CFM} for a class of {\em real valued} Lagrangians of the form $L(s,y,u)=\L(y,u)\Psi(s,y)$ assuming the {\em continuity} of $\L(y,u)$, $\Psi(s,y)>m>0$ (thus excluding Manià's Lagrangian) and the convexity of $u\mapsto \L(y,u)$.
    Notice that the Lagrangian in Manià's example is the product of $\L(y,u)=u^6$ with $\Psi(s,y)=(y-s^3)^2$, but $\Psi$ takes the value 0 (along the minimizer).
\item Mariconda in \cite{MTrans} for a general class of  Lagrangians $\L(s,y,u)$, assuming the local Lipschitz Condition (S) on the time variable $s$, radial convexity  in the velocity variable and a linear growth from below in the velocity variable. It is important to notice that Condition (S) for $\L$ is not an option: the lack of its validity  may lead to the Lavrentiev phenomenon, as in Ball -- Mizel example. In the extended valued case it is assumed, moreover, that $\L$ tends with some uniformity to $+\infty$ as the {\em distance} to the boundary of the effective domain $\Dom(\L)$ (i.e., the set where $\L$ is finite) tends to 0. As in \cite{ASC} the lack of regularity of the Lagrangian is compensated by some local boundedness condition, including (${\rm B}_{\L}$).
\end{itemize}
\subsubsection*{The main results}
These were the main motivations for this paper:
\begin{enumerate}
\item Is it true, as suggested by \cite[Remark 2.8]{ASC}, that the Lavrentiev phenomenon  for the {\em two end point} conditions problem does not occur in the case of  a real valued  autonomous Lagrangian that is bounded on bounded sets?
\item Find a set of assumptions, possibly smaller than those presented in \cite{MTrans} or \cite{CFM}, that guarantee, even in the {\em autonomous} case, the non-occurrence of the phenomenon for the {\em two end points} conditions problem.
\item Find sufficient conditions for the non-occurrence of the Lavrentiev phenomenon with just {\em one} end point condition for {\em non autonomous} Lagrangians.
\end{enumerate}
The main results give actually some light on the problems, and apply to the wider  class of Lagrangians of the form
\begin{equation}\label{tag:Lag}L(s,y,u)=\L(s,y,u)\Psi(s,y),\quad \L, \Psi\ge 0,\end{equation}
with different sets of hypotheses for $\L$ and for $\Psi$. All of the results in the paper assume Condition (S) (just) on $s\mapsto \L(s,y,u)$,  and the continuity of $y\mapsto\Psi(s,y)$. Notice that this class il strictly larger than the one of functions of the form $L(s,y,u)=\L(s,y,u)$ satisfying (S). For instance $L(s,y,u):=\L(s,y,u)\Psi(s,y)$ with
\[\forall s\in [0,1], \forall y,u\in\R\qquad \L(s,y,u):=u^2,\, \Psi(s,y)=\sqrt s\]
does not satisfy (S), but $\L$ does.
 In Section~\ref{sect:PX}  extend the result of Alberti -- Serra Cassano \cite[Theorem 2.4]{ASC} to the wider class of Lagrangians \eqref{tag:Lag}, for the variational problem with  (just) {\em one end}  point constraint
    \begin{equation}\tag{$\mathcal P_X$}\min \{F(y):\, y\in W^{1,p}(I;\R^n),\, y(t)=X\in\R^n\}.\end{equation}
Corollary~\ref{coro:Lav1} shows the non-occurrence of the Lavrentiev phenomenon once $\L$, other than Condition (B$_{\L}$) (obviously adapted to the non autonomous case), satisfies Condition (S). Regarding $\Psi$, it is enough that, for all $K>0$,
\begin{itemize}
 \item[(${\rm B}_{\Psi}$)]  $\Psi$ is bounded on $I\times B_K$;
 \item[(${\rm C}_{\Psi}$)] $\Psi(\cdot, z)$ is continuous for every $z\in B_K$.%
 \item[(${\rm P}_{\Psi}$)] $\inf\Psi>0$.
 \end{itemize}
The  above gives an answer to Question 3. Concerning Question 2, we show that, give $y\in W^{1,p}(I;\R^n)$ such that $F(y)<+\infty$, there is no Lavrentiev gap for the {\em two end point} conditions problem once one assumes, moreover, that:
  \begin{itemize}
\item[(${\rm U}_{y,\L}$)] There is an open subset $U_y$ of $y(I)$ such that, for all $r>0$, $\L$ is bounded on $I\times U_y\times B_r$.
\end{itemize}
Hypothesis  (${\rm U}_{y,\L}$) was conjectured by Alberti in a personal communication for the autonomous case. As shown in Example~\ref{ex:alberti}, its lack may lead to a gap. Notice that Hypothesis (${\rm U}_{y,\L}$) subsumes the fact that $\L$ is real valued on an infinite strip. Thus,  in order to be satisfied for any admissible trajectory $y$, we require in Corollary~\ref{coro:Lav1}  that $\L$ is bounded on bounded sets (and thus a fortiori,  that $\L$ is not extended valued).   This gives, in any case, a reassuring positive answer to Question 1, confirming the statement in \cite[Remark 2.8]{ASC}.

When $\Psi\equiv 1$,  the conclusions of Theorem~\ref{thm:Lav1} and  Corollary~\ref{coro:Lav1} do  not overlap with those obtained in \cite[Corollary 6.7]{MTrans} for the same kind of Lagrangians. Some extra hypotheses assumed there, e.g., the  radial convexity assumption on $0<r\mapsto \L(s,z,rv)$, reveals to be more suitable for extended valued Lagrangians and allow to deal with state constraints, out or reach with the methods of the present paper.
\subsubsection*{A word on the proof}
The proof of Theorem~\ref{thm:Lav1} follows the path of that of \cite[Theorem 2.4]{ASC}. Given  $y\in W^{1,p}(I;\R^n)$ such that $F(y)<+\infty$, we build  a sequence $(y_h)_h$ of Lipschitz functions, satisfying the required boundary conditions/state constraints, and:
\begin{enumerate}
 \item $\displaystyle\lim_{h\to +\infty}F(y_h)=F(y)$;
 \item $y_h\to y$ in $W^{1,p}(I;\R^n)$.
\end{enumerate}
We begin by considering a standard approximating sequence $(z_h)_h$ of Lipschitz functions such that $z_h'=y'$ everywhere except at most an open subset $A_h$ whose measure tends to 0 as $m\to +\infty$. We then reparametrize  each $z_h$, i.e., we set $y_h:=z_h\circ\psi_h$ for a suitable Lipschitz injective function  $\psi_h:I\to I$. A delicate point here, is that in general (B$_{\L}$) and the assumptions for $\Psi$ alone do not ensure that $\psi_h(I)=I$. This is the point where, for the two end point conditions problem, Hypothesis (U$_{y, \L}$) plays a role, allowing to build  a suitable change of variables  $\varphi_{h}$, satisfying  $\varphi_h(t)=t$ and  $\varphi_h=T$ in such a way that $y_h:=z_h\circ\psi_h$ fulfills the required properties.
\section{Notation and Basic assumptions}
\subsection{Basic Assumptions}
Let $p\ge 1$. The functional $F$ (sometimes referred as to the ``energy'')  is defined by
\[\forall y\in W^{1,p}(I,\R^n)\qquad F(y):=\int_IL(s, y(s), y'(s))\, ds,\]
where $L(s,y,v)$ is of the form $L(s,y,v)=\L(s, y, v)\Psi(s,y)$.
\begin{basicass}
We assume  the following conditions.
\begin{itemize}
\item $I=[t, T]$ is a closed, bounded  interval of $\R$;
\item $\L:I\times \R^n\times\R^n\to [0, +\infty[\cup\{+\infty\},\, (s,y,u )\mapsto \L(s,y,u )$ ($n\ge 1$) is Borel measurable;
\item $\Psi:I\times\R^n\to [0, +\infty]$ is  Borel.
\item    The  {\bf effective domain} of $\L$, given by
   \[\Dom (\L):=\{(s,y,u)\in I\times\R^n\times\R^n:\, \L(s,y,u )<+\infty\}\]
   is of the form $\Dom(\L)=I\times D_{\L}$, with $D_{\L}\subseteq\R^n\times\R^n$.
\end{itemize}
\end{basicass}
\subsection{Notation}
We introduce the main recurring notation:
\begin{itemize}
\item The Euclidean norm of  $x\in \R^n$ is denoted by $|x|$;
\item The Lebesgue measure of a subset $A$ of $I=[t,T]$ is $|A|$ (no confusion may occur with the Euclidean norm);
\item If $y:I\to\R^n$ we denote by $y(I)$ its image, by $\|y\|_{\infty}$ its sup-norm and by $\|y\|_p$ its norm in $L^p(I;\R^n)$;
\item The complement of a set $A$ in $\R^n$ is denoted by $A^c$;
\item The characteristic function of a set $A$ is $\chi_A$.
\item If $x\in \R$, we denote by $x^+$ its positive part, by $x^-$ its negative part;
\item  $\Lip(I;\R^n)=\{y:I\to\R^n,\, y\text{ Lipschitz}\}$;  if $n=1$ we simply write $\Lip(I)$;
\item For $p\ge 1$, $W^{1,p}(I;\R^n)=\{y:I\to\R^n:\, y, y'\in L^p(I;\R^n)\}$; if $n=1$ we simply write $W^{1,p}(I)$.
\end{itemize}
\subsection{Two variational problems}
We shall consider  different variational problems associated to the functional $F$, with different end-point conditions and/or state constraints.
Let $X, Y\in \R^n$. We define
\begin{itemize}
\item $\Gamma_X:=\{y\in W^{1,p}(I,\R^n):\, y(t)=X\}$, and the corresponding variational problem
\begin{equation}\tag{$\mathcal P_X$}\text{Minimize }\{F(y):\, y\in \Gamma_X\},\end{equation}
whenever $\inf (\mathcal P_{X})<+\infty$.
\item $\Gamma_{X, Y}:=\{y\in W^{1,p}(I,\R^n):\, y(t)=X,\, y(T)=Y\}$, and the corresponding variational problem
\begin{equation}\tag{$\mathcal P_{X, Y}$}\text{Minimize }\{F(y):\, y\in \Gamma_{X, Y}\},\end{equation}
whenever $\inf (\mathcal P_{X,Y})<+\infty$.
\end{itemize}
There is no privilege in considering the initial condition $y(t)=X$ instead of the final one $y(T)=Y$ for the one end point conditions problem: any result obtained here  can be reformulated for  a final end point condition variational problem, with the same set of assumptions.
\subsection{Lavrentiev gap at a function and Lavrentiev phenomenon}
In this paper we consider different boundary data for the same integral functional.
\begin{definition}[\textbf{Lavrentiev gap at $y\in W^{1,p}(I;\R^n)$}]
Let $y\in W^{1,p}(I;\R^n)$ be such that $F(y)<+\infty$ and let $\Gamma\in \{\Gamma_X, \Gamma_{X,Y}\}$.\\ We say that the \textbf{Lavrentiev gap} does not occur at $y$ for the variational problem corresponding to $\Gamma$ if
there exists a sequence $\left(y_h\right)_{h\in\N}$ of functions in $\Lip (I, \R^n)$   satisfying:
 \begin{enumerate}
 \item $\forall h\in\mathbb N\quad y_h\in\Gamma$;
 \item $\displaystyle\limsup_{h\to +\infty}F(y_h)= F(y)$;
 \item $y_h\to y$ in $W^{1,p}(I;\R^n)$.
\end{enumerate}
We say that the \textbf{Lavrentiev phenomenon} does not occur for the variational problem corresponding to $\Gamma$ if
\begin{equation}\label{tag:Lavrentievdefi}
\inf_{\substack{y\in W^{1,p}(I; \R^n)\\y\in \Gamma}}F(y)=\inf_{\substack{
y\in \Lip(I; \R^n)\\
y\in \Gamma}}F(y).
\end{equation}
\end{definition}
\begin{remark}[Gap and phenomenon] \phantom{AA}
\begin{itemize}
\item Let $y\in W^{1,p}(I;\R^n)$. The non-occurrence of the phenomenon at $y$ ensures that, given $\varepsilon>0$, there is a {\em Lipschitz} function $\overline y$ satisfying the same boundary data and/or constraints such that $F(\overline y)\le F(y)+\varepsilon$. If $L(s,y,\cdot)$ is convex for all $(s,y)\in I\times\R^n$, and $L(s, \cdot, \cdot)$ is lower semicontinuous, the non-occurrence of the Lavrentiev gap at $y$ implies the convergence of $(y_h)_h$ to $y$ in {\em energy}, i.e.,
    \[\lim_{h\to +\infty}F(y_h)=F(y):\] Indeed in that case  $F$ is weakly lower semicontinuous.
\item Of course, the non-occurrence of the Lavrentiev gap  along a minimizing sequence
implies the non-occurrence of the Lavrentiev phenomenon for the same variational problem.
\end{itemize}
\end{remark}
The following celebrated example motivates the need to distinguish problems with just {\em one} end point condition from problems with  {\em both} end points conditions.
\begin{example}[Manià's example \cite{Mania}]\label{ex:Mania1} Consider the problem of minimizing
\begin{equation}\tag{$\mathcal P_{0,1}$}F(y)=\int_0^1(y^3-s)^2(y')^6\,ds:\, y\in W^{1,1}(I),\, y(0)=0,\, y(1)=1.\end{equation}
Then $y(s):=s^{1/3}$ is  a minimizer and $F(y)=0$. Not only $y$ is not Lipschitz; it turns out (see \cite[\S 4.3]{GBH}) that the Lavrentiev phenomenon occurs, i.e.,
\[0=\min F=F(y)<\inf\{F(y):\, y\in \Lip([0,1]),\, y(0)=0, y(1)=1\}.
\]
However, as it is noticed in \cite{GBH}, the situation changes drastically if one allows to vary the initial boundary condition along the sequence $(y_h)_h$. Indeed it turns out that the sequence $(y_h)_h$, where each $y_h$ is obtained by truncating $y$ at $1/h$, $h\in\mathbb N_{\ge 1}$, as follows:
\[y_h(s):=\begin{cases}{1}/{h^{1/3}}&\text{ if } s\in [0, 1/h],\\
s^{1/3}&\text{ otherwise}, \end{cases}\]
\begin{figure}[h!]
\begin{center}
\includegraphics[width=0.4\textwidth]{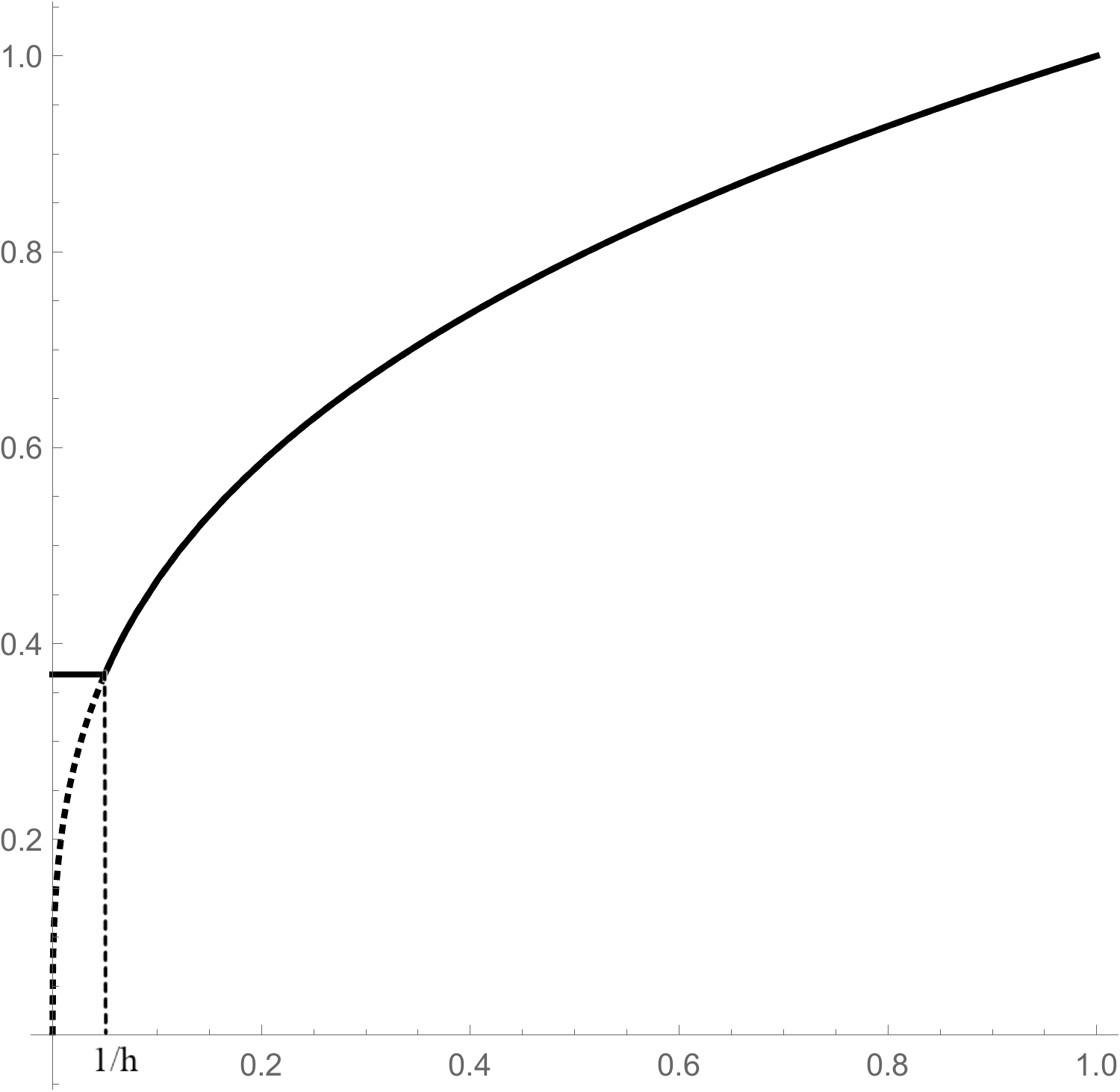}
\caption{{\small The function $y_h$.}}
\end{center}
\end{figure}
is a sequence of Lipschitz functions satisfying
\[y_h(1)=y(1)=1,\quad F(y_h)\to F(y),\quad y_h\to y \text{ in }W^{1,1}([0,1]).\]
Therefore, no Lavrentiev phenomenon occurs for the variational problem
\begin{equation}\min F(y)=\int_0^1(y^3-s)^2(y')^6\,ds:\, y\in W^{1,1}(I),\, \, y(1)=1.\end{equation}
\end{example}
\subsection{Condition (S)}\label{sect:SS}
We consider the following local Lipschitz condition (S) on the first variable of $\L$.
\begin{conditionS}
For every  $K\ge 0$ of $\R^n$ there are $\kappa,  \beta\ge 0, \gamma\in L^1[t,T]$,
$\varepsilon_*>0$ satisfying,
for a.e.  $s\in I$
 \begin{equation}\label{tag:H3}
|\L(s_2,z,v)-\L(s_1,z,v)|\le \big(\kappa \L(s,z,v) +\beta|v|^p+\gamma(s)\big)\,|s_2-s_1|
\end{equation}
whenever $s_1,s_2\in [s-\varepsilon_*,s+\varepsilon_*]\cap I$, $z\in B_K$, $v \in\R^n$, $(s,z,v) \in\Dom(\L)$.
\end{conditionS}
\begin{remark} Condition (S) is fulfilled if $\L=\L(y,u)$ is autonomous. In the smooth setting, Condition (S) ensures the validity of the Erdmann - Du Bois-Reymond (EDBR) condition. In this more general framework it plays a key role in Lipschitz regularity under slow growth conditions \cite{Clarke1993, BM2, MTrans}  and ensures the validity of the (EDBR) for real valued Lagrangians \cite{BM1, BM2}.
\end{remark}
\subsection{A useful option: linear growth from below for $\L$}
The following additional linear growth from below on $\L$ is not assumed in the main results; however its validity allows to weaken some of the hypotheses of Corollary~\ref{coro:Lav1} below.
\begin{itemize}
\item  There are $\alpha>0$ and $d\ge 0$ satisfying, for a.e. $s\in[0,T]$ and every $z\in \R^n , v\in \R^n$,
\begin{equation}\label{tag:lingrowth}\tag{{\rm G}$_{\L}$}\L(s,z,{v})\ge \alpha|{v}|-d.\end{equation}
\end{itemize}
\begin{lemma}\label{lemma:linbelow}
Let $y\in W^{1,p}(I; \R^n)$ be such that $F(y)<+\infty$. Assume  that $\L$ fulfills ({\rm G}$_{\L}$) and that the infimum of $\Psi$ along the graph of $y$ is strictly positive, i.e.,
\begin{itemize}
\item[(${\rm P}_{y,\Psi}$)] There is $m_{y, \Psi}>0$ such that $\Psi(s, z)\ge m_{y, \Psi}$ for all $s\in I, z\in y(I)$.
\end{itemize}
Then
\[\|y\|_1\le \dfrac{F(y)+\displaystyle m_{y,\Psi}d(T-t)}
{\displaystyle m_{y,\Psi}\alpha}.\]
\end{lemma}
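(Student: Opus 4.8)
The plan is to combine the pointwise lower bound \eqref{tag:lingrowth} with the hypothesis $(\mathrm{P}_{y,\Psi})$, both read along the fixed competitor $y$, and then integrate over $I$; no approximation or limiting argument is needed, so the statement is essentially a one-line estimate.

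First I would note that for a.e.\ $s\in I$ the point $y(s)$ lies in $y(I)$, so $(\mathrm{P}_{y,\Psi})$ yields $\Psi(s,y(s))\ge m_{y,\Psi}>0$. Since $\L\ge 0$ by the Basic Assumptions, this gives
\[
L(s,y(s),y'(s))=\L(s,y(s),y'(s))\,\Psi(s,y(s))\ \ge\ m_{y,\Psi}\,\L(s,y(s),y'(s))\qquad\text{for a.e. }s\in I.
\]
Next, applying \eqref{tag:lingrowth} with $z=y(s)$ and $v=y'(s)$ one has $\L(s,y(s),y'(s))\ge\alpha|y'(s)|-d$ for a.e.\ $s\in I$, whence
\[
L(s,y(s),y'(s))\ \ge\ m_{y,\Psi}\bigl(\alpha|y'(s)|-d\bigr)\qquad\text{for a.e. }s\in I.
\]
Integrating over $I$ and recalling $F(y)=\int_I L(s,y(s),y'(s))\,ds$ produces
\[
F(y)\ \ge\ m_{y,\Psi}\,\alpha\int_I|y'(s)|\,ds-m_{y,\Psi}\,d\,(T-t)\ =\ m_{y,\Psi}\,\alpha\,\|y'\|_1-m_{y,\Psi}\,d\,(T-t).
\]
Since $F(y)<+\infty$ and $m_{y,\Psi}\alpha>0$, rearranging gives the asserted estimate
\[
\|y'\|_1\ \le\ \frac{F(y)+m_{y,\Psi}\,d\,(T-t)}{m_{y,\Psi}\,\alpha}.
\]

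I do not expect any genuine obstacle. The only points to check are routine measurability facts: $s\mapsto L(s,y(s),y'(s))$, and hence $s\mapsto\L(s,y(s),y'(s))$ and $s\mapsto\Psi(s,y(s))$, are measurable because $\L$ and $\Psi$ are Borel and $s\mapsto(s,y(s),y'(s))$ is measurable, and discarding the Lebesgue-null sets on which \eqref{tag:lingrowth} or the above a.e.\ inequalities may fail does not affect the integrals. The one conceptual remark worth recording is that $\Psi$ is \emph{not} assumed bounded below on all of $I\times\R^n$ (in Mani\`a's example it vanishes along the minimizer); the estimate uses only the lower bound of $\Psi$ restricted to the graph of the chosen $y$, which is exactly the part of $\Psi$ that the energy $F(y)$ sees.
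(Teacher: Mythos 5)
Your proof is correct and follows essentially the same route as the paper: bound $\Psi$ from below by $m_{y,\Psi}$ along the graph of $y$, apply \eqref{tag:lingrowth} pointwise, integrate, and rearrange. Note that what is actually established (both by you and by the paper's own proof) is the bound on $\int_t^T|y'(s)|\,ds$, i.e.\ on $\|y'\|_1$, which is how the estimate is subsequently used in Corollary~\ref{coro:Lav1}.
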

\begin{proof} From (${\rm P}_{\Psi}$) and ({\rm G}$_{\L}$) we obtain
\[\begin{aligned}F(y)&=\int_t^T\L(s,y(s), y'(s))\Psi(s,y(s))\,ds\\
&\ge m_{y,\Psi}\,\int_t^T\L(s, y(s), y'(s))\,ds\\
&\ge  m_{y,\Psi}\alpha\int_t^T|y'(s)|\,ds-
m_{y,\Psi}d(T-t),\end{aligned}\]
proving that
\[\int_t^T|y'(s)|\,ds\le \dfrac{F(y)+\displaystyle m_{y,\Psi}d(T-t)}
{\displaystyle m_{y,\Psi}\alpha}.\]
\end{proof}
%
\section{Non-occurrence of the Lavrentiev gap/phenomenon for ($\mathcal P_X$) and for ($\mathcal P_{X, Y}$) }\label{sect:PX}
Let $X\in\R^n$. We consider here  problems ($\mathcal P_X$) ($\mathcal P_{X, Y}$).
\subsection{Nonoccurrence of the Lavrentiev gap}
Theorem~\ref{thm:Lav1} below extends \cite[Theorem 2.4]{ASC} to nonautonomous Lagrangians: indeed if $\L(s,y,u)=\L(y,u)$ then Condition (S) is fulfilled.
\begin{theorem}[\textbf{Non-occurrence of the Lavrentiev  gap for {\rm(}$\mathcal P_X${\rm)}  and for {\rm(}$\mathcal P_{X, Y}${\rm)} at $y\in W^{1,p}(I;\R^n)$}]\label{thm:Lav1} Assume that $\L$ satisfies Condition (S)  and let $y\in W^{1,p}([t, T],\R^n)$ be such that \[\L(s, y(s), y'(s))\Psi(s, y(s))\in L^1([t, T]).\]
Moreover, suppose that there is a neighbourhood $\mathcal O_y$ of $y(I)$ in $\R^n$ such that:
\begin{itemize}
\item[(${\rm B}_{y,\Psi}$)] $\Psi$ is bounded on $I\times \mathcal O_y$;
\item[(${\rm C}_{y,\Psi}$)]$\Psi(\cdot, z)$ is continuous for all $z\in y(I)$;
\item[(${\rm B}_{y,\Lambda}$)] There is   $\nu_0>0$ such that $\L$ is bounded on $I\times \mathcal O_y\times B_{\nu_0}$.
\end{itemize}
Moreover, assume that  $\L(s, y, y')\in L^1(I)$.
Then:
\begin{enumerate}
\item  There is {no Lavrentiev gap} for ($\mathcal P_X$) at $y$.
\item Assuming, moreover, that:
\begin{itemize}[leftmargin=*]
\item[(${\rm U}_{y,\L}$)] There is an open subset $U_y$ of $y(I)$ such that, for all $r>0$, $\L$ is bounded on $I\times U_y\times B_r$,
\end{itemize}
then there is {no Lavrentiev gap} for ($\mathcal P_{X, Y}$) at $y$.
\end{enumerate}
\end{theorem}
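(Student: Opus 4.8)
The plan is to follow the scheme announced in the introduction. Given $y\in W^{1,p}(I;\R^n)$ with $L(\cdot,y,y')\in L^1(I)$ and $\Lambda(\cdot,y,y')\in L^1(I)$, I would produce the Lipschitz competitors in the form $y_h:=z_h\circ\psi_h$, where $(z_h)_h$ is a standard Lipschitz truncation of $y$ and $\psi_h\colon I\to I$ is an increasing bi-Lipschitz change of variable that slows $y_h$ down where $y'$ is large. For the truncation, fix $\lambda_h\uparrow+\infty$ with $\int_{A_h}|y'|\le 1/h$, where $A_h:=\{s\in I:\,(\mathcal M y')(s)>\lambda_h\}$ is open and, by the weak maximal inequality, $|A_h|\to0$; let $z_h$ coincide with $y$ on $I\setminus A_h$ and at $t,T$ and be affine on each connected component of $A_h$. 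Then $z_h\in\Lip(I;\R^n)$, $z_h'=y'$ a.e.\ on $I\setminus A_h$, $\int_{A_h}|z_h'|\le\int_{A_h}|y'|\to0$, $z_h\to y$ uniformly and in $W^{1,p}$, and $z_h(I)\subset\mathcal O_y$ for $h$ large.

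Next, set $\varphi_h:=\psi_h^{-1}$ and, on $A_h$, $\varphi_h':=\max\{|z_h'|/\nu_0,\varepsilon_h\}$ (the small $\varepsilon_h>0$ only keeps $\varphi_h'$ positive): this forces $|z_h'/\varphi_h'|\le\nu_0$ on $A_h$, so $y_h$ traverses $z_h(A_h)$ with velocity in $B_{\nu_0}$. Off $A_h$ I keep $\varphi_h'=1$, save for restoring $\int_I\varphi_h'=T-t$. For $(\mathcal P_X)$ this costs nothing: letting $\psi_h$ stop slightly before $T$, the excess $\theta_h:=\int_{A_h}\varphi_h'-|A_h|$ tends to $0$, so $\psi_h(T)\to T$ while $y_h(t)=z_h(t)=X$ and no condition at $T$ is needed. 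For $(\mathcal P_{X,Y})$ the map $\psi_h$ must be a bijection of $I$; here I would compensate the excess $\theta_h$ on a set $S_h\subset(c,d)$ of measure $2|\theta_h|\to0$, where $(c,d)$ is a fixed subinterval with $y(\overline{(c,d)})\subset U_y$, on which $z_h\equiv y$ (arranged in the truncation) and $|y'|\le\tau$ for a fixed $\tau$ (such $\tau$ exists since $|y'|<+\infty$ a.e.\ on $(c,d)$); on $S_h$ put $\varphi_h'\equiv\tfrac12$ (or $\tfrac32$, according to the sign of $\theta_h$). In all cases $\varphi_h\to\mathrm{id}$ uniformly and $\psi_h$ is bi-Lipschitz, hence $y_h=z_h\circ\psi_h\in\Lip(I;\R^n)$ satisfies the prescribed end-point condition(s).

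To pass to the limit, the substitution $\sigma=\psi_h(s)$ gives
\[
F(y_h)=\int\Lambda\!\left(\varphi_h(\sigma),z_h(\sigma),\tfrac{z_h'(\sigma)}{\varphi_h'(\sigma)}\right)\Psi\bigl(\varphi_h(\sigma),z_h(\sigma)\bigr)\varphi_h'(\sigma)\,d\sigma .
\]
On $A_h$ the integrand is $\le\bigl(\sup_{I\times\mathcal O_y\times B_{\nu_0}}\Lambda\bigr)\bigl(\sup_{I\times\mathcal O_y}\Psi\bigr)\varphi_h'$ by $(\mathrm{B}_{y,\Lambda})$ and $(\mathrm{B}_{y,\Psi})$, with $\int_{A_h}\varphi_h'\to0$; on $S_h$ it is $\le\bigl(\sup_{I\times U_y\times B_{2\tau}}\Lambda\bigr)\bigl(\sup_{I\times\mathcal O_y}\Psi\bigr)\varphi_h'$ by $(\mathrm{U}_{y,\Lambda})$ and $(\mathrm{B}_{y,\Psi})$, with $|S_h|\to0$; so both contributions vanish, as do the corresponding parts of $F(y)$. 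On the remaining set $z_h=y$, $\varphi_h'=1$, and the integrand is $\Lambda(\varphi_h(\sigma),y,y')\Psi(\varphi_h(\sigma),y)$; Condition~(S) yields
\[
\bigl|\Lambda(\varphi_h(\sigma),y,y')-\Lambda(\sigma,y,y')\bigr|\le\bigl(\kappa\Lambda(\sigma,y,y')+\beta|y'|^p+\gamma(\sigma)\bigr)\|\varphi_h-\mathrm{id}\|_\infty ,
\]
whose right-hand side tends to $0$ pointwise and is dominated by a fixed $L^1$ function, so $\Lambda(\varphi_h(\cdot),y,y')\to\Lambda(\cdot,y,y')$ in $L^1(I)$; combined with $\Psi(\varphi_h(\sigma),y(\sigma))\to\Psi(\sigma,y(\sigma))$ a.e.\ (from $(\mathrm{C}_{y,\Psi})$) and $0\le\Psi\le\sup_{I\times\mathcal O_y}\Psi$, Vitali's convergence theorem gives that this part tends to $\int_I\Lambda(\sigma,y,y')\Psi(\sigma,y)\,d\sigma=F(y)$. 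Hence $F(y_h)\to F(y)$, and a parallel computation together with continuity of translations in $L^p$ gives $y_h\to y$ in $W^{1,p}(I;\R^n)$.

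The main difficulty is the reparametrisation in the two-end-point case. A Vitali covering estimate for $\mathcal M(y')$ forces $\tfrac1{|A_h|}\int_{A_h}|y'|\ge c\,\lambda_h\to+\infty$, so the excess time $\theta_h$ is far larger than $|A_h|$ and cannot be absorbed by a local rearrangement of $\varphi_h$ near $A_h$; moreover, since $\Lambda$ is only Borel (not continuous in $u$), the velocity may not be altered on a set of positive measure. Hypothesis $(\mathrm{U}_{y,\Lambda})$ is exactly what permits the compensation — on a vanishing set $S_h$ and at a fixed speed $2\tau$ — with extra cost bounded by a fixed constant times $\theta_h=o(1)$; this is where the two-end-point problem genuinely departs from the one-end-point one. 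Arranging the truncation so that $z_h=y$ on $\overline{(c,d)}$ and at $t,T$, and so that all exceptional sets are disjoint, is routine bookkeeping.
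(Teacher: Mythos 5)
Your proposal is correct and follows essentially the same route as the paper's proof: a Lipschitz truncation $z_h$ off a small open set $A_h$, a reparametrisation with $\varphi_h'$ comparable to $|z_h'|/\nu_0$ on $A_h$ so that the new velocity stays in $B_{\nu_0}$, a compensating set of vanishing measure inside $y^{-1}(U_y)$ on which $\varphi_h'$ is a constant $\ne 1$ to restore $\varphi_h(T)=T$ in the two-end-point case (this is exactly where $(\mathrm{U}_{y,\Lambda})$ enters in the paper as well), and Condition (S) plus the boundedness hypotheses to pass to the limit in the three pieces of $F(y_h)$. The only cosmetic differences are that the paper takes $\varphi_h'=\max\{|z_h'|/\nu_0,\,1\}$ on $A_h$, so that $\varphi_h'\ge 1$ everywhere and $\psi_h$ is defined on all of $I$ without the small extension your choice $\max\{|z_h'|/\nu_0,\,\varepsilon_h\}$ would require when the excess is negative, and that it replaces your appeal to Vitali's theorem by an elementary Fatou-type lemma.
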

\begin{remark}\label{rem:positivity}Notice that in Theorem~\ref{thm:Lav1}, the integrability of $\L(s, y, y')$ is satisfied if, for instance, $\Psi$ satisfies Condition (P$_{y, \Psi}$) formulated in Lemma~\ref{lemma:linbelow}. Indeed, if $\Psi\ge m_{y, \Psi}$ on $I\times y(I)$, then
\[\int_t^T\L(s, y(s), y'(s))\,ds\le \dfrac1{m_{y,\Psi}}F(y)<+\infty.\]
\end{remark}
Manià example~\ref{ex:Mania1} shows that the integrability of $\L(s,y,y')$ is not a technical matter for the problem with two and, as shown below,  even one end point condition.
 \begin{example} Consider Manià's Example~\ref{ex:Mania1}. The Lagrangian may be written as
\[L(s,y,u)=(y^3-s)^2u^6=\L(u)\Psi(s,y),\quad \Psi(s,y)=(y^3-s)^2,\,\L(u)=u^6.\]
Consider the minimizer $y(s):=s^{1/3}$.
All of  the assumptions of Theorem~\ref{thm:Lav1} are satisfied, except the integrability of $\L(y')$; notice also that $\Psi$ vanishes along $(s, y(s))$ (see Remark~\ref{rem:positivity}).
As a matter of fact, the Lavrentiev gap occurs at $y$, for the variational problem with (just) initial prescribed datum
\[\min F(z):=\int_0^1(z^3-s)^2(z')^6\,ds,\quad z(0)=0.\]
Indeed, if $y_h$ is a sequence of Lipschitz functions that converges to $y$ in $W^{1,1}$ then $y_h(1)$ is definitely in any given neighborhood of $y(1)=1$, say in $[3/4, 3/2]$: the arguments of \cite[\S 4.3]{GBH} show that there is $\eta>0$ such that $F(y_h)\ge \eta$ for $h$ big enough.
\end{example}
\subsection{Examples}
The next examples concern the autonomous case, i.e., $\L=\L(z,v)$ and $\Psi\equiv 1$.
Example~\ref{ex:alberti} below shows that Hypothesis (${\rm U}_{y,\L}$) is essential for the validity of Claim 2 in Theorem~\ref{thm:Lav1}, when $\L$ is extended valued. It is a slight  modification of an example
 by G. Alberti (personal communication).
\begin{example}[Occurrence of the Lavrentiev phenomenon in  an autonomous, convex  and l.s.c. problem with both endpoint constraints]\label{ex:alberti}
Let
 $y\in W^{1,1}([0,1];\R)$ be such that
\begin{itemize}
\item $y$ is of class $C^1$ in $[0,1[$, $y(0)=0, y(1)=1$;
\item $ y'>0$ on $[0, 1[$,
\item $y'(1):=\displaystyle\lim_{s\to 1^-}y'(s)=+\infty$.
    \end{itemize}
Such a function exists,  e.g., $y(s):=1-\sqrt{1-s}, s\in [0,1]$.
For every $z\in [0,1[$ set $q(z):=y'(y^{-1}(z))$. Let, for $(s,y,v)\in [0,1]\times \R\times\R$,
\[\L(s,z,v):=\begin{cases} 0&\text{ if } z\in [0,1[\text{ and } v\le q(z)\text{ or } z\notin [0,1[,\\
+\infty&\text{ otherwise},\end{cases}\qquad  \Psi\equiv 1,\]
\begin{figure}[h!]
\begin{center}
\includegraphics[width=0.55\textwidth]{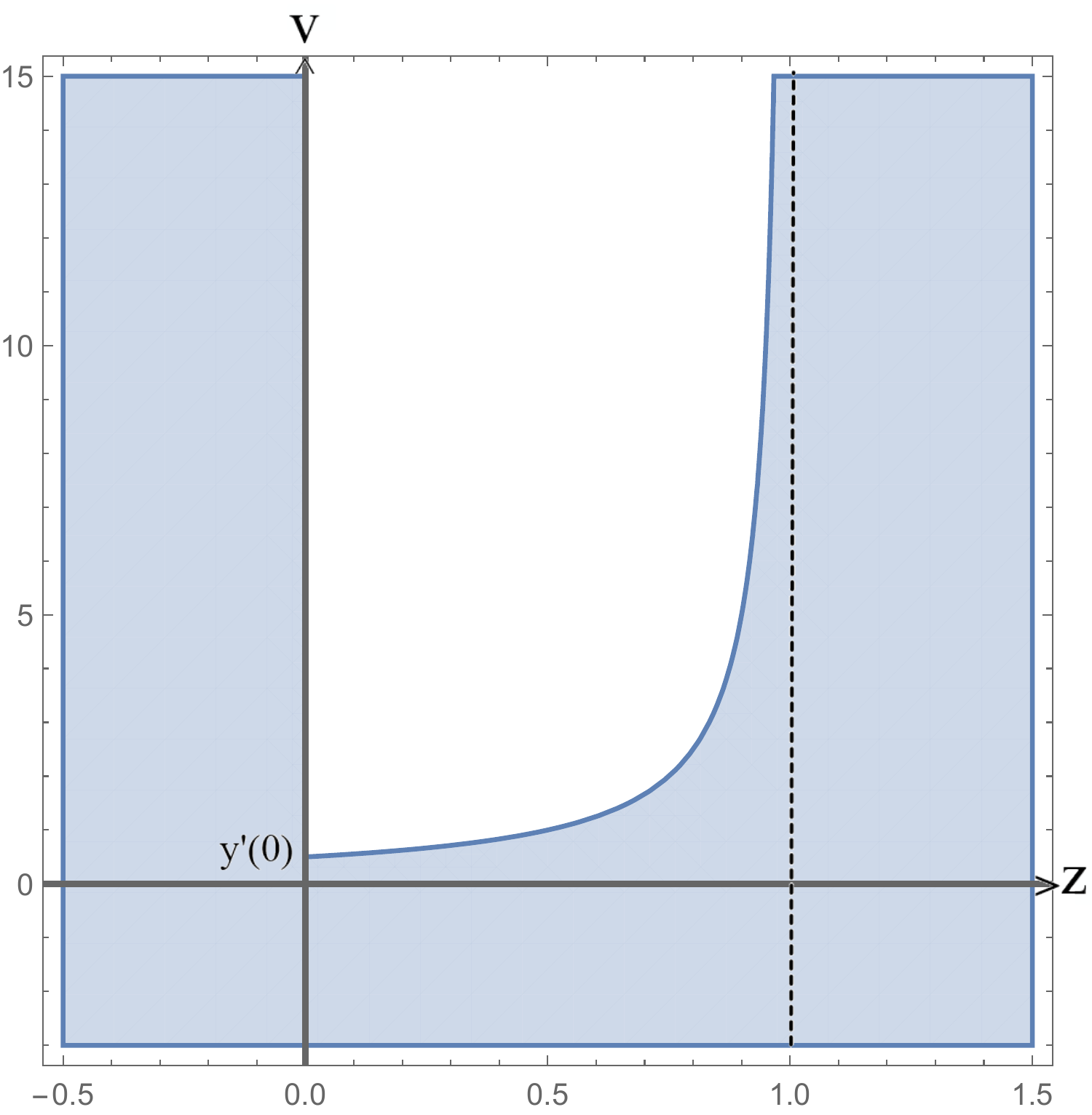}
\caption{{\small The domain of $\L(s, \cdot, \cdot)$ in Example~\ref{ex:alberti}}}
\label{fig:exalberti}\end{center}
\end{figure}
and set $F(z):=\displaystyle\int_0^1\L(s, z(s), z'(s))\,ds$ for every $z\in W^{1,1}([0,1];\R)$.
Clearly $F(y)=\min F=0$. Notice that:
\begin{itemize}
\item $\L$ is autonomous;
\item  $\L$ is lower semicontinuous on $\R^2$ and $\L(s, z, \cdot)$ is convex for all $z\in\R$;
\item $\L$ is bounded on $\R\times \R\times ]-y'(0), y'(0)[$;
\item $\L$ takes the value $+\infty$ on every set of the form $[0,1]\times ]a,b[\times \R$.
\end{itemize}
Thus $\L, \Psi$ satisfy all the assumptions of Claim 1 in Theorem~\ref{thm:Lav1}. In particular, there is no Lavrentiev  gap at $y$ (and thus phenomenon) for the problem with just one end point condition: either $y(0)=0$ or $y(1)=1$. However,
 condition (U$_{y, \L}$) is not fulfilled.
\\
{\em Claim. $F(z)=+\infty$ for every Lipschitz $z:[0,1]\to\R$ satisfying $z(0)=0, z(1)=1$.}
 Indeed assume the contrary: let $z$ be such a function and suppose $F(z)<+\infty$.
Let $0\le t_1<t_2\le 1$ be such that $z(t_1)=0, z(t_2)=1$ and $z([t_1, t_2])=[0, 1]$.
Since $F(z)<+\infty$ then
\begin{equation}\label{tag:q1}z'(s)\le q(z(s))\text{ a.e.  on } [t_1,t_2].\end{equation}
Notice that, since  $\displaystyle\lim_{s\to 1}q(z(s))=+\infty$ and $z'$ is bounded, then necessarily \eqref{tag:q1} is strict on a non negligible set.  It follows that
\[\int_{t_1}^{t_2}\dfrac{z'(s)}{q(z(s))}\,ds<\int_{t_1}^{t_2}\,ds=t_2-t_1\le 1.\]
However the change of variable $\zeta=z(s)$ (which is justified, for instance, by the chain rule \cite[Theorem 1.74]{MalyZiemer}), gives
\[\begin{aligned}\int_{t_1}^{t_2}\dfrac{z'(s)}{q(z(s))}\,ds&=\int_{0}^1\dfrac1{q(\zeta)}\,d\zeta\\
&=\int_0^1\dfrac{1}{y'(y^{-1}(\zeta))}\,d\zeta\\
&= \small{(\tau=y^{-1}(\zeta))} \int_0^1\dfrac{y'(\tau)}{y'(\tau)}\,d\tau=1,
\end{aligned}\]
a contradiction, proving the claim.
\end{example}
The violation of Condition  (${\rm B}_{y,\L}$) in  Theorem~\ref{thm:Lav1} may cause the occurrence of the phenomenon, even  for one point constraint problems.
\begin{example}[Occurrence of the phenomenon in  autonomous scalar problems with  one endpoint constraint]\label{ex:alberti2}
Let $y$ and $q$ be as in Example~\ref{ex:alberti}, assume moreover that $y\in C^2$ and $y''>0$ on $[0,1[$. Define, for $(s, z, v)\in [0,1]\times\R\times\R$,
\[\L(s,z,v):=\begin{cases} 0&\text{ if } z\in I:=[0,1]\text{ and } v\ge q(z),\\
+\infty&\text{ otherwise},\end{cases}\]
\begin{figure}[h!]
\begin{center}
\includegraphics[width=0.45\textwidth]{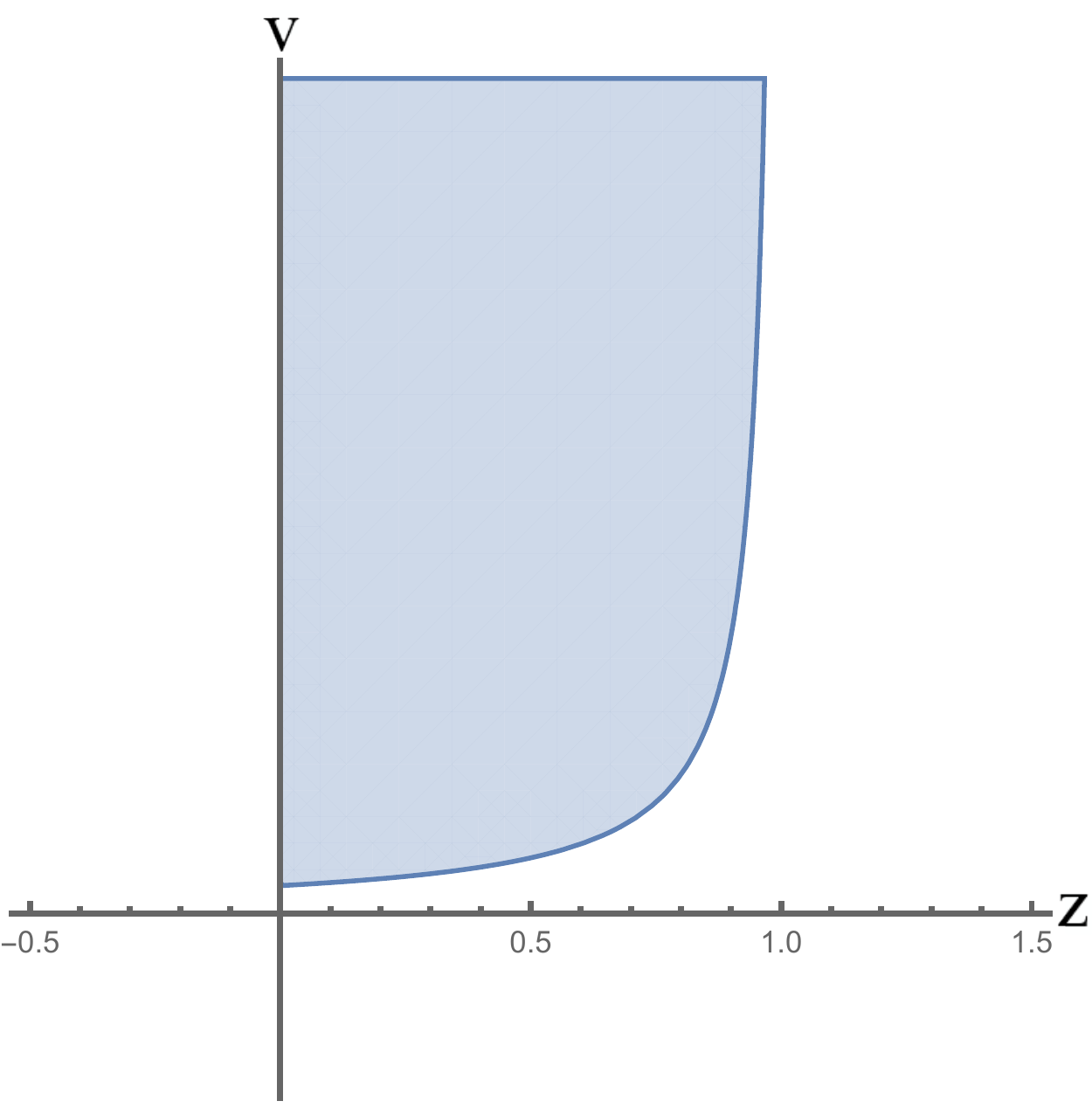}
\caption{{\small The domain of $\L$ in Example~\ref{ex:alberti2}}}
\label{fig:exalberti2}\end{center}
\end{figure}
and set $F(z):=\displaystyle\int_0^1\L(s, z(s), z'(s))\,ds$ for every $z\in W^{1,1}([0,1];\R)$.
Notice that $\L$ is autonomous, lower semicontinuous and $\L(s, z,\cdot)$ is convex for every $z\in\R$. The Lagrangian here is of the form $\L(s,z,z')\Psi(s,z)$ with $\Psi\equiv 1$.  Clearly $\Psi$  satisfies the required assumptions of Theorem~\ref{thm:Lav1} and $\inf\Psi>0$, whereas $\L$ takes the on every rectangle $I\times y(I)\times B_{\nu_0}$ and thus  violates (${\rm B}_{y,\L}$) in  Theorem~\ref{thm:Lav1}.\\
{\em Claim.
The Lavrentiev phenomenon occurs for $F$  with the end point condition $z(1)=1$.}
\\
Clearly $\L(s, y, y')=0$ a.e. in $[0,1]$, so that $F(y)=\min F=0$. However, $F(z)=+\infty$ for every Lipschitz function $z$ satisfying $z(1)=1$.
Indeed, let $z(\cdot)$ be  a Lipschitz function on $[0, 1]$ satisfying $z(1)=1$. Let $M\ge 0$ be such that  $z'\le M$ a.e. on $[0, 1]$ and $t_*\in [0,1[$ be such that $y'(t_*)> M$. Since $y(t_*)<1=z(1)$ there is, by continuity,
 $\varepsilon>0$  such that
 \begin{equation}\label{tag:iqdgdqiugd}\forall s\in [1-\varepsilon, 1]\qquad   y(t_*)<z(s).\end{equation}
Then $\L(s,z(s), z'(s))=+\infty$ on $[1-\varepsilon, 1]$.  Indeed, if $s\in [1-\varepsilon,1]$, the monotonicity of $y'$ implies
\[
z'(s)\le M<y'(t_*)<y'(y^{-1}(z(s))=q(z(s)),
\]
whence $\L(z(s), z'(s))=+\infty$. It follows that $F(z)=+\infty$.
\end{example}
\subsection{Nonoccurrence of the Lavrentiev phenomenon}
We immediately obtain a sufficient condition for the non-occurrence of the Lavrentiev gap. The hypotheses of Corollary~\ref{coro:Lav1} ensure that the conditions of Theorem~\ref{thm:Lav1} are satisfied for every $y\in W^{1,p}(I;\R^n)$ with $F(y)<+\infty$.
\begin{corollary}[\textbf{Non-occurrence of the Lavrentiev phenomenon for {\rm(}$\mathcal P_X${\rm)}}]\label{coro:Lav1}
 Assume the validity of the Basic Assumptions  and, moreover, that, for all $K>0$:
\begin{itemize}
 \item[(${\rm B}_{\Psi}$)]  $\Psi$ is bounded on $I\times B_K$;
  \item[(${\rm C}_{\Psi}$)] $\Psi(\cdot, z)$ is continuous for every $z\in B_K$;%
  \item[(${\rm P}_{\Psi}$)] There is $m_{K, \Psi}>0$ such that $\Psi(s, z)\ge m_{K, \Psi}$ for all $s\in I, z\in B_K$;
\item[(${\rm B}_{\Lambda}$)] There is   $\nu_0>0$ such that $\L$ is bounded on $I\times B_K\times B_{\nu_0}$.
    \end{itemize}
Then:
\begin{enumerate}
\item The {Lavrentiev phenomenon does not occur} for ($\mathcal P_X$).
\item If $\L$ is real valued and,
\begin{itemize}[leftmargin=*]
\item[(U$_{\L}$)]
For all $r>0$, $\L$ is bounded on $I\times B_K\times B_r$,
\end{itemize}
then the
{Lavrentiev phenomenon does not occur} for ($\mathcal P_{X, Y}$).
\item  If, in addition to the assumptions,  $\L$ fulfills ({\rm G}$_{\L}$) and $\inf \Psi=m_{\Psi}>0$, the conclusions of Claims 1, 2 hold whenever  Hypotheses (${\rm B}_{\Psi}$), (${\rm C}_{\Psi}$), (${\rm B}_{\Lambda}$) are satisfied  for just one value of $K>K_0$, where     \begin{equation}\label{tag:K0} K_0:= |X|+\dfrac{\inf {\rm(}\mathcal P_X{\rm)}+\displaystyle m_{\Psi}d(T-t)}
{\displaystyle m_{\Psi}\alpha}.\end{equation}
\end{enumerate}
 \end{corollary}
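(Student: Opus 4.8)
The plan is to reduce all three claims to Theorem~\ref{thm:Lav1}, together with the remark, recorded just after the definition of the Lavrentiev gap, that the absence of the gap along a minimizing sequence forces the equality of infima in \eqref{tag:Lavrentievdefi}.

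\emph{Claims 1 and 2.} Fix an arbitrary $y\in W^{1,p}(I;\R^n)$ with $F(y)<+\infty$ and set $K:=\|y\|_\infty+1$, so that $y(I)$ lies in the open ball of radius $K$; pick $\delta\in(0,1)$ with $\|y\|_\infty+\delta<K$ and let $\mathcal O_y:=\{z\in\R^n:\dist(z,y(I))<\delta\}$, an open neighbourhood of $y(I)$ contained in $B_K$. Then (B$_\Psi$), (C$_\Psi$), (B$_\Lambda$) applied with this $K$ give at once (B$_{y,\Psi}$), (C$_{y,\Psi}$), (B$_{y,\Lambda}$) by restriction to $I\times\mathcal O_y$ (resp.\ $I\times\mathcal O_y\times B_{\nu_0}$), while (P$_\Psi$) gives $\Psi(s,y(s))\ge m_{K,\Psi}>0$ on $I\times y(I)$, so that $\L(s,y,y')\le m_{K,\Psi}^{-1}L(s,y,y')\in L^1(I)$ since $L\ge 0$ and $F(y)<+\infty$ (this is Remark~\ref{rem:positivity}). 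Hence Claim 1 of Theorem~\ref{thm:Lav1} applies at $y$ and yields no Lavrentiev gap for ($\mathcal P_X$); if, moreover, $\L$ is real valued and (U$_\L$) holds for this $K$, then $\L$ is bounded on $I\times B_K\times B_r\supseteq I\times y(I)\times B_r$ for every $r>0$, so $U_y:=y(I)$, which is (relatively) open in $y(I)$, witnesses (U$_{y,\L}$), and Claim 2 of Theorem~\ref{thm:Lav1} yields no Lavrentiev gap for ($\mathcal P_{X,Y}$). Since $y$ was arbitrary, I would then take a minimizing sequence $(w_k)_k$ for the problem at hand, apply the above at each $w_k$ (discarding terms of infinite energy), and extract, using $\limsup_h F(y_h)=F(w_k)$ in the no-gap property, admissible Lipschitz competitors $v_k$ with $F(v_k)\le F(w_k)+1/k$; then $\inf_{\Lip}F\le\lim_k F(v_k)=\inf(\mathcal P_X)$ (resp.\ $\inf(\mathcal P_{X,Y})$), and the reverse inequality being trivial, \eqref{tag:Lavrentievdefi} follows. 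This proves Claims 1 and 2.

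\emph{Claim 3.} Here (G$_\L$) and $m_\Psi:=\inf\Psi>0$ let one replace ``for all $K$'' by ``for one $K>K_0$''. Since $\Psi\ge m_\Psi$ everywhere, Condition (P$_{y,\Psi}$) holds for every $y$ with $m_{y,\Psi}=m_\Psi$, so by the proof of Lemma~\ref{lemma:linbelow} one has $\int_t^T|y'|\le \big(F(y)+m_\Psi d(T-t)\big)/(m_\Psi\alpha)$, hence, as $y(t)=X$,
\[\|y\|_\infty\le |X|+\frac{F(y)+m_\Psi d(T-t)}{m_\Psi\alpha}=K_0+\frac{F(y)-\inf(\mathcal P_X)}{m_\Psi\alpha}\]
for every $y\in\Gamma_X$ with $F(y)<+\infty$, where $K_0$ is as in \eqref{tag:K0}. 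Consequently, along any minimizing sequence $(w_k)_k$ of ($\mathcal P_X$) one has $\|w_k\|_\infty<K$ for all $k$ large, because $K>K_0$ and $F(w_k)\to\inf(\mathcal P_X)$; for such $k$ the argument of the previous paragraph applies verbatim at $w_k$ using only the single value $K$ (and $m_{K,\Psi}=m_\Psi$ for the integrability of $\L(s,w_k,w_k')$), so there is no gap at $w_k$, and one concludes exactly as before. The case of ($\mathcal P_{X,Y}$) is identical, invoking Claim 2 of Theorem~\ref{thm:Lav1}, provided $\L$ is real valued and (U$_\L$) holds for that $K$.

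\emph{Main obstacle.} I do not expect a serious analytic difficulty; the content is bookkeeping. The two delicate points are: (i) translating the hypotheses that are global in $K$ into the $y$-dependent hypotheses of Theorem~\ref{thm:Lav1}, including the correct reading of (U$_{y,\L}$); and (ii), for Claim 3, making the a priori estimate of Lemma~\ref{lemma:linbelow} precise enough that, along a minimizing sequence, $\|w_k\|_\infty$ eventually lies \emph{strictly} below the prescribed threshold $K$ --- this genuinely requires $F(w_k)\to\inf(\mathcal P_X)$ (rather than a crude bound such as $F(w_k)\le\inf(\mathcal P_X)+1$) together with the strict inequality $K>K_0$ --- and then extracting a single good Lipschitz competitor $v_k$ from the $\limsup$ appearing in the no-gap property before letting $k\to\infty$.
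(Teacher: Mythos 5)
Your proposal is correct and follows essentially the same route as the paper: both reduce each claim to Theorem~\ref{thm:Lav1} applied along a minimizing sequence, verifying the localized hypotheses (${\rm B}_{y,\Psi}$), (${\rm C}_{y,\Psi}$), (${\rm B}_{y,\Lambda}$), (${\rm U}_{y,\L}$) on a ball $B_K$ containing a neighbourhood of the trajectory, using Remark~\ref{rem:positivity} for the integrability of $\L(s,y,y')$, and, for Claim 3, invoking Lemma~\ref{lemma:linbelow} to get the uniform a priori bound $\|w_k\|_\infty<K$ for $k$ large so that a single $K>K_0$ suffices. Your treatment of the strictness $K>K_0$ via $F(w_k)\to\inf(\mathcal P_X)$ matches the paper's choice of $h_0$ with $\tfrac1{(h_0+1)m_\Psi\alpha}<K-K_0$.
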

 \begin{proof} 1. Let $(\overline y_h)_h$ be a minimizing sequence for (P$_X$) such that
\[\forall h\in\mathbb N\qquad F(\overline y_h)\le \inf {\rm(}\mathcal P_X{\rm)}+\dfrac1{h+1}.\]
 Fix $h\in\mathbb N$ and choose $K=K_h>0$ in such a way that $B_{K_h}$ contains a neighborhood of $\overline y_h(I)$; the validity of hypotheses (${\rm B}_{\L}$), (${\rm B}_{\Psi}$), (${\rm C}_{\L}$) implies that of (${\rm B}_{\overline y_h, \L}$), (${\rm B}_{\overline y_h, \Psi}$), (${\rm C}_{\overline y_h, \L}$) with $\mathcal O_h:=B_{K_h}$, and that of (P$_{\overline y_h, \Psi}$). By applying Theorem~\ref{thm:Lav1} we obtain  the existence of $y_h\in\Lip(I;\R^n)$
 satisfying the boundary condition $y_h(t)=X$  and
 \[F(y_h)\le F(\overline  y_h)+\dfrac1{h+1}\le  \inf {\rm(}\mathcal P_X{\rm)}+\dfrac2{h+1}.\]
 The claim follows.\\
 2. Consider a minimizing sequence $(\overline y_h)$ for ($\mathcal P_{X, Y}$) such that
\[\forall h\in\mathbb N\qquad F(\overline y_h)\le \inf {\rm(}\mathcal P_{X, Y}{\rm)}+\dfrac1{h+1}.\]
Notice that, for each $h\in\mathbb N$,  the validity of Hypothesis (U$_{\Lambda}$) implies that of (U$_{\overline  y_h,\Lambda}$). We proceed as in
 Claim 1.
 \\
 3.  If $({\rm P}_{\Psi}$) and  ({\rm G}$_{\L}$) hold, from Lemma~\ref{lemma:linbelow} we may assume in the proof of Claims 1 and 2 that
 \[\begin{aligned}\|\overline  y_h\|_{\infty}&\le |X|+\|\overline y_h\|_1\\
 &\le |X|+\dfrac{F(\overline  y_h)+\displaystyle m_{\Psi}d(T-t)}
{\displaystyle m_{\Psi}\alpha}\\
&\le |X|+\dfrac{\inf {\rm(}\mathcal P_X{\rm)}+\frac1{h+1}+\displaystyle m_{\Psi}d(T-t)}
{\displaystyle m_{\Psi}\alpha}< K,\end{aligned}\]
once $h\ge  h_0$, with $\dfrac1{(h_0+1)m_{\Psi}\alpha}<K-K_0$.
We can thus take,  for {\em all} $h\ge h_0$, $K_h:\equiv K$ in remaining part of the proof. The conclusion follows.
  \end{proof}
\begin{remark}Here are some comments concerning the assumptions  of  Theorem~\ref{thm:Lav1} and Corollary~\ref{coro:Lav1}.
 \begin{itemize}
 \item Hypothesis  (${\rm B}_{\Lambda}$) is required in \cite[Theorem 2.4]{ASC} for autonomous Lagrangians of the form $L(s,y,u)=\L(y,u)$.
 \item The validity of (${\rm B}_{\Psi}$) (resp.  of (${\rm C}_{\Psi}$)) for {\em every} $K>0$  is of course equivalent to the fact that $\Psi$ is bounded on bounded sets (resp.  that $\Psi(\cdot, z)$ is continuous for every $z\in\R^n$). Claim 3 of Corollary~\ref{coro:Lav1} explains the choice of the apparently naive formulation of the assumptions.
 \item The validity of (${\rm B}_{\Lambda}$) for {\em every} $K>0$ implies that $\L$ is bounded in $I\times \R^n\times \{0\}$; as shown in Corollary~\ref{coro:Lav1} this inconvenient is encompassed if$\L$ satisfies ({\rm G}$_{\L}$).
 \item Hypothesis (U$_{y, \L}$) implies that the effective domain of $\L$ contains the unbounded strip $I\times U_y\times \R^n$; Hypothesis (U$_{\L}$) forces $\L$ to be real valued or, if (G$_{\L}$) holds, at least real valued on $I\times B_{K_0}\times \R^n$ where $K_0$ is given by \eqref{tag:K0}.
     \end{itemize}
 \end{remark}
 In the real valued, continuous case, many of the assumptions of Corollary~\ref{coro:Lav1} are satisfied whenever $\L$ si bounded on bounded sets.
 \begin{corollary}[\textbf{Non-occurrence of the Lavrentiev phenomenon for  {\rm(}$\mathcal P_{X}${\rm)} -- real valued case}]\label{coro:Lav1realcont}
 Assume the validity of the Basic Assumptions  and, moreover, that $\L$ is real valued, bounded on bounded sets, and that $\Psi$ is continuous and strictly positive.
Then the Lavrentiev phenomenon does not occur for ($\mathcal P_{X, Y}$).
 \end{corollary}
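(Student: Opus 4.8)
The plan is to recognise this statement as a special case of Corollary~\ref{coro:Lav1}, Claim~2, and to check that the present, very economical hypotheses imply all the assumptions listed there. Recall that Condition~(S) on $s\mapsto\L(s,y,u)$ is a standing hypothesis throughout the paper, and that the Basic Assumptions are assumed as well; so it remains to verify, for every $K>0$, the conditions $({\rm B}_{\Psi})$, $({\rm C}_{\Psi})$, $({\rm P}_{\Psi})$, $({\rm B}_{\Lambda})$, together with the real-valuedness of $\L$ and condition $({\rm U}_{\Lambda})$ that is needed for the two end point problem.

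First I would dispose of the conditions on $\Psi$. Fix $K>0$. The set $I\times B_K$ is compact and $\Psi$ is continuous, so $\Psi$ is bounded on $I\times B_K$ --- this is $({\rm B}_{\Psi})$ --- and attains a minimum $m_{K,\Psi}:=\min_{I\times B_K}\Psi$ there; since $\Psi>0$ everywhere one has $m_{K,\Psi}>0$, which is $({\rm P}_{\Psi})$. Condition $({\rm C}_{\Psi})$, namely that $\Psi(\cdot,z)$ be continuous for each fixed $z\in B_K$, is immediate from the joint continuity of $\Psi$. Next, the conditions on $\L$: since $\L$ is bounded on bounded sets and $I\times B_K\times B_r$ is bounded for every $r>0$, $\L$ is bounded on $I\times B_K\times B_r$; taking $r=1$ gives $({\rm B}_{\Lambda})$ with $\nu_0=1$, while the statement for every $r>0$ is exactly $({\rm U}_{\Lambda})$. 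Finally $\L$ is real valued by hypothesis. Hence every hypothesis of Corollary~\ref{coro:Lav1} is in force, and Claim~2 of that corollary yields the non-occurrence of the Lavrentiev phenomenon for $(\mathcal P_{X,Y})$, i.e. \eqref{tag:Lavrentievdefi} with $\Gamma=\Gamma_{X,Y}$.

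One point worth recording is that the problem $(\mathcal P_{X,Y})$ is non-degenerate under these assumptions: any affine $y$ with $y(t)=X$, $y(T)=Y$ is Lipschitz, and $F(y)<+\infty$ because $\L$ is bounded on the bounded set $I\times y(I)\times B_{\|y'\|_\infty}$ and $\Psi$ is bounded on $I\times y(I)$ by continuity; thus $\inf(\mathcal P_{X,Y})<+\infty$ and both infima in \eqref{tag:Lavrentievdefi} are finite. There is no serious obstacle in this argument: the only step that is not purely formal is the passage from ``$\Psi$ continuous and strictly positive'' to ``$\Psi$ uniformly bounded away from $0$ on each bounded set'', which is the extreme value theorem applied on the compact sets $I\times B_K$; everything else is a direct transcription of the hypotheses into those of Corollary~\ref{coro:Lav1}.
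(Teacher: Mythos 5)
Your proof is correct and follows exactly the route the paper intends: the corollary is stated without proof precisely because it reduces to Corollary~\ref{coro:Lav1}, Claim~2, once one observes (as you do, via compactness of $I\times B_K$ and the extreme value theorem) that continuity and strict positivity of $\Psi$ give $({\rm B}_{\Psi})$, $({\rm C}_{\Psi})$, $({\rm P}_{\Psi})$, and that boundedness of $\L$ on bounded sets gives $({\rm B}_{\Lambda})$ and $({\rm U}_{\Lambda})$. Your additional check that $\inf(\mathcal P_{X,Y})<+\infty$ via an affine competitor is a sensible extra detail the paper leaves implicit.
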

 \section{Proof of Theorem~\ref{thm:Lav1}}
Lemma~\ref{lemma:2.6} is a slight modification of \cite[Lemma 2.6]{ASC}.
\begin{lemma}\label{lemma:2.6} Let $(g_h)_h$ be a sequence of functions in $L^1(I, [0, +\infty[)$ converging to $g\in L^1(I)$ a.e. in $I$ and $(E_h)_h$ be a sequence of measurable subsets of $I$ such that $|I\setminus E_h|\to 0$. Then
\[\int_{E_h}g_h\,ds\to \int_Ig\,ds.\]
\end{lemma}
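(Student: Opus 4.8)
The plan is to reduce this to a standard truncation-plus-dominated-convergence argument, handling separately the part of the integral over $E_h$ where $g_h$ is large and the part where it is controlled. First I would observe that it suffices to prove $\int_{E_h} g_h \to \int_I g$, and since $\int_I g_h \to \int_I g$ is \emph{not} assumed, the only available tool is pointwise convergence $g_h \to g$ a.e.\ together with $g_h \ge 0$ and $g_h \in L^1$. The key structural fact I would exploit is Fatou's lemma in the form: since $g_h \chi_{E_h} \to g$ a.e.\ (because $|I \setminus E_h| \to 0$ forces, after passing to a subsequence, $\chi_{E_h} \to 1$ a.e., and in fact one can arrange $\chi_{E_h} \to 1$ a.e.\ along the full sequence only after subsequence extraction — so I would first argue that it is enough to prove convergence along an arbitrary subsequence, then extract a further subsequence with $\chi_{E_h} \to 1$ a.e.). This gives $\liminf_h \int_{E_h} g_h \ge \int_I g$ immediately from Fatou applied to $g_h \chi_{E_h}$.

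The harder direction is the $\limsup$ bound $\limsup_h \int_{E_h} g_h \le \int_I g$. Here I would split $\int_{E_h} g_h = \int_{E_h} \min(g_h, g) \, ds + \int_{E_h} (g_h - g)^+ \, ds$. Wait — that is not quite the right split because $(g_h-g)^+$ need not be small. The correct approach: for $\lambda > 0$ write
\[
\int_{E_h} g_h \, ds = \int_{E_h \cap \{g_h \le \lambda\}} g_h \, ds + \int_{E_h \cap \{g_h > \lambda\}} g_h \, ds.
\]
On the first piece, $g_h \chi_{E_h \cap \{g_h \le \lambda\}}$ is dominated by $\lambda \chi_I$ and converges a.e.\ to $g \chi_{\{g \le \lambda\}}$ (on the set where $g \ne \lambda$, which is a.e., using pointwise convergence), so by dominated convergence this piece tends to $\int_{\{g \le \lambda\}} g \, ds \le \int_I g \, ds$. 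The genuine obstacle is controlling $\int_{E_h \cap \{g_h > \lambda\}} g_h\, ds$ uniformly in $h$ as $\lambda \to \infty$, because the $g_h$ are not equi-integrable in general. This is exactly the point where the hypothesis that the \emph{limit} $g$ lies in $L^1$ and $g_h \to g$ a.e.\ must be converted into equi-integrability — and it cannot be, without more. So I expect the actual proof in \cite{ASC} uses the stronger-looking but available fact that the statement is really about a \emph{fixed} $g$ and one only needs: given $\eps$, choose $\lambda$ with $\int_{\{g > \lambda/2\}} g < \eps$; on $\{g_h > \lambda\}$, for $h$ large the set $\{g_h > \lambda\} \setminus \{g > \lambda/2\}$ has small measure (by Egorov / a.e.\ convergence), but on that small-measure set $g_h$ is not controlled.

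Re-examining, I believe the lemma as stated must come with the tacit reading that $(g_h)$ is in fact such that $\int_I g_h \to \int_I g$, or else $g_h$ is a reparametrization-type sequence for which equi-integrability holds; in the \cite{ASC} setting the $g_h$ arise as $L(\cdot, z_h, z_h')$ along a fixed approximating construction and one has $\int_I g_h \le \int_I g + o(1)$ \emph{by construction}. Granting $\limsup_h \int_I g_h \le \int_I g$, the $\limsup$ direction is then trivial: $\int_{E_h} g_h \le \int_I g_h$, so $\limsup_h \int_{E_h} g_h \le \limsup_h \int_I g_h \le \int_I g$, and combined with the Fatou lower bound we are done. So the plan I would actually execute is: (1) reduce to convergence along subsequences and extract $\chi_{E_h} \to 1$ a.e.; (2) apply Fatou to $g_h \chi_{E_h}$ for $\liminf \ge \int_I g$; (3) use the hypothesis $\int_I g_h \to \int_I g$ (which I would make explicit, or derive from whatever form of the hypothesis the paper intends) together with $g_h \ge 0$ to get $\int_{I \setminus E_h} g_h = \int_I g_h - \int_{E_h} g_h$ and hence $\limsup_h \int_{E_h} g_h \le \int_I g$; (4) conclude. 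The main obstacle is precisely pinning down which integrability/convergence hypothesis on $(g_h)$ is being used, since bare a.e.\ convergence of nonnegative $L^1$ functions to an $L^1$ limit does not suffice for the $\limsup$ bound.
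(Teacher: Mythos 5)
Your plan follows essentially the same route as the paper's own proof: pass to a subsequence so that $\chi_{E_h}\to 1$ a.e., apply Fatou's lemma to $g_h\chi_{E_h}$ to obtain $\liminf_h\int_{E_h}g_h\,ds\ge\int_Ig\,ds$, and use $g_h\ge 0$, $E_h\subset I$ to reduce the upper estimate to a bound on $\limsup_h\int_Ig_h\,ds$. More importantly, the obstacle you isolate in the $\limsup$ direction is genuine, and it is present in the paper's proof itself: the paper asserts that ``Fatou's lemma gives $\limsup_h\int_Ig_h\,ds\le\int_Ig\,ds$,'' but Fatou's lemma yields the \emph{opposite} inequality $\int_Ig\,ds\le\liminf_h\int_Ig_h\,ds$; a reverse-Fatou estimate requires an integrable majorant or an additional hypothesis. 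Indeed the lemma as stated is false: take $I=[0,1]$, $E_h=I$, $g_h=h\,\chi_{[0,1/h]}$, $g=0$; every hypothesis holds, yet $\int_{E_h}g_h\,ds=1\not\to 0=\int_Ig\,ds$.

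The statement needs the extra hypothesis $\limsup_h\int_Ig_h\,ds\le\int_Ig\,ds$ (which is the form in which the result appears in \cite{ASC}), or alternatively a domination $g_h\le G$ for some fixed $G\in L^1(I)$; with either, your step (3) and the paper's argument both close, since then $\limsup_h\int_{E_h}g_h\,ds\le\limsup_h\int_Ig_h\,ds\le\int_Ig\,ds$. In the one place the lemma is invoked (step \emph{viii a)} of the proof of Theorem~\ref{thm:Lav1}) one has $g_h=\L(\tau,y,y')\Psi(\varphi_h,y)\chi_{\psi_h(I)\setminus A_h}\le M\,\L(\tau,y,y')\in L^1(I)$ by \eqref{tag:M}, so the domination hypothesis is available and the application is sound once the statement of the lemma is corrected. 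Your diagnosis --- that bare a.e.\ convergence of nonnegative $L^1$ functions to an $L^1$ limit does not yield the $\limsup$ bound --- identifies precisely the missing hypothesis.
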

\begin{proof} Possibly passing to a subsequence, we may assume that the characteristic functions of $E_h$ converge to 1 a.e. in $I$. Fatou's lemma then yields \[\displaystyle\liminf_h\int_{E_h}g_h\,ds\ge\int_Ig\,ds.\] Moreover, since  $g_h\ge 0$ and $E_h\subset I$ for every $h$, we have that $\displaystyle\int_{E_h}g_h\,ds\le \int_Ig_h\,ds$ and Fatou's lemma gives
\[\limsup_h\int_{E_h}g_h\,ds\le \limsup_h\int_{I}g_h\,ds\le\int_Ig\,ds,\]
which concludes the proof.
\end{proof}
\begin{proof}[Proof of Theorem~\ref{thm:Lav1}]
{\em Proof of Claim 1.}\\
 The first  steps of the proof follow  the path of the proof of \cite[Theorem 2.4]{ASC}, which are recalled and adapted to the more general Lagrangian considered here.
\begin{itemize}[leftmargin=*]
\item[{\em i)}] It follows from Assumptions {(${\rm B}_{y,\Lambda}$)} and  {(${\rm B}_{y,\Psi}$)}  that there are $M, {\nu_0}>0$ and a neighbourhood $\mathcal O_y$ of $y(I)$ such that:
   \begin{equation}\label{tag:M}\L(s,z,u) \le M, \quad \Psi(s,z)\le M\text{ whenever } z\in \mathcal O_y, |u|\le {\nu_0}.\end{equation}
   \item[{\em ii)}] For every $h\in\mathbb N$ there are a Lipschitz function $z_h:I\to \R^n$ and an open subset $A_h$ of $I$ such that (see Figure~\ref{fig:dimalberti}):
       \begin{itemize}
   \item[$\bullet$] $z_h(t)=y(t)$, $z_h(T)=y(T)$,
   \item[$\bullet$]  $z_h=y,\, z_h'=y'$ in $I\setminus A_h$,
   \item[$\bullet$]  $z_h$ is affine in each connected component of $A_h$,
   \item[$\bullet$]  $|A_h|\le \dfrac{T-t}{2(h+1)}$.
 \end{itemize}
 \begin{figure}[h!]
\begin{center}
\includegraphics[width=0.55\textwidth]{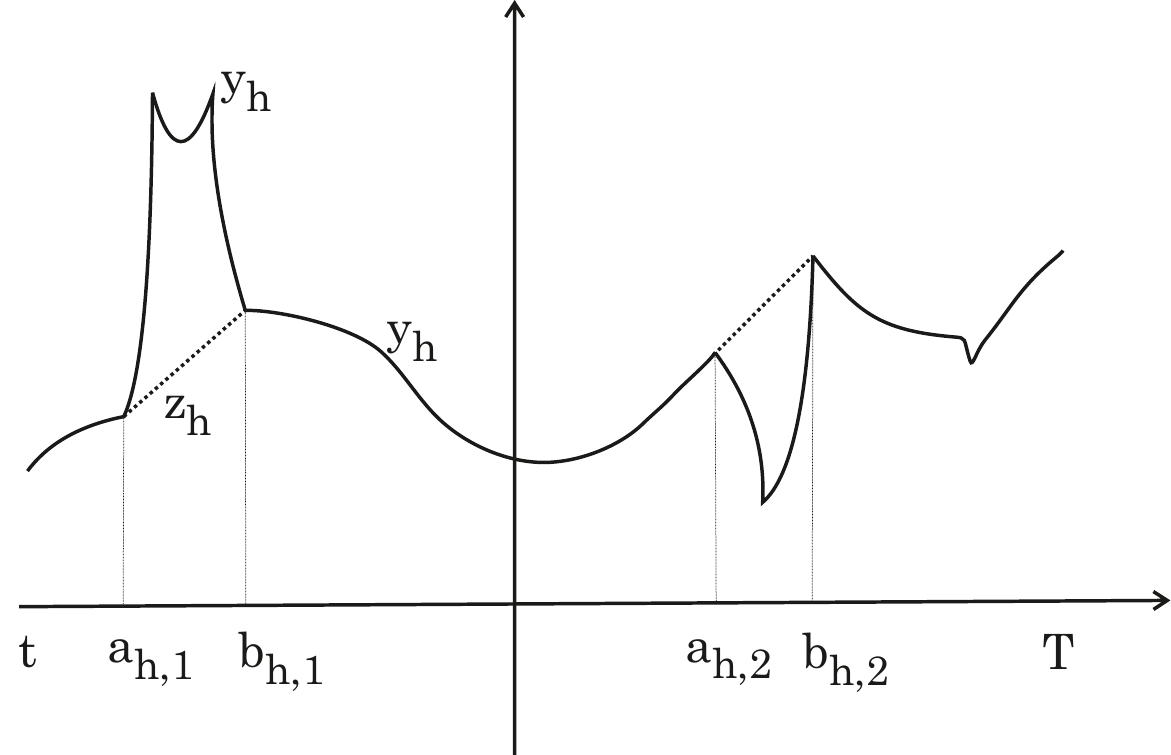}
\caption{{\small The functions $y_h$ and $z_h$.}}
\label{fig:dimalberti}
\end{center}
\end{figure}
 \item[{\em iii)}]
 Writing each $A_h$ as a countable union of open intervals $I_{h,k}:=]a_{h,k}, b_{h,k}[$, $k\in {J_h}\subseteq\N$, set
     \[\alpha_{h,k}:=|I_{h,k}|=b_{h,k}-a_{h,k},\quad \beta_{h,k}:=z_h(b_{h,k})-z_h(a_{h,k})=y(b_{h,k})-y(a_{h,k}).\]
     Then
     \[\sum_{k\in{J_h}}|\beta_{h,k}|\le \sum_{k\in\Lambda_k}\int_{I_{h,k}}|y'|\,ds=\int_{A_h}|y'|\,ds\to 0 \quad h\to \infty.\]
 \item[{\em iv)}] For every $h\in\mathbb N$ define $\varphi_h\in\ W^{1,1}(I)$ by
 \[\varphi_h(t)=t,\quad \varphi_h'=
 \begin{cases}1&\text{ in }I\setminus A_h,\\
\dfrac{|z_h'|}{\nu_0} \vee 1&\text{ in } A_h.\end{cases}\]
 \item[{\em v)}] It turns out that $\varphi_h:[t, T]$ is injective, $I\subseteq \varphi_h(I)$ , $(\varphi_h(s))_h$ converges uniformly to $Id(s):=s$ on $I$ and $|\varphi_h(I)|\to |I|$.
 \item[{\em vi)}] We have
  \[\begin{aligned}\label{tag:zhestimate}|\varphi_h(A_h)|&=\int_{\varphi_h(A_h)}1\,ds=\int_{A_h}|\varphi_h'(\tau)|\,d\tau\\
     &\le \dfrac1{\nu_0}\int_{A_h}|z_h'(s)|\,ds+|A_h|\\
     &\le \dfrac1{\nu_0}\sum_{k\in J_k}\int_{I_{h,k}}|z_h'(s)|\,ds+|A_h|\\
     &\le \dfrac1{\nu_0}\sum_{k\in J_k}|\beta_{h,k}|+|A_h|\to 0.
     \end{aligned}\]

 \item[{\em vii)}] Let $\psi_h:[t, T]\to [t, t_h]$ ($t_h\le T$) be the inverse of $\varphi_h$, restricted to $I$. Define $y_h:=z_h(\psi_h)$. Then $y_h$ is Lipschitz, $y_h(t)=y(t)$ and
     \[y_h'=z_h'(\psi_h)\psi_h'=
     \begin{cases}y'(\psi_h)&\text{ in }I\setminus \varphi_h(A_h),\\
 \left({\nu_0}\dfrac{z_h'(\psi_h)}{|z_h'(\psi_h)|}\right)\wedge z_h'(\psi_h) &\text{ in }I\cap \varphi_h( A_h).\end{cases}\]
 Moreover, $(y_h)_h$ converges to $y$ in $W^{1,p}(I, \R^n)$. We may thus assume that $y_h(I)\subset \mathcal O_y$ for every $h$.
\item[{\em viii)}]
It remains to show that $(F(y_h))_h$ converges to $F(y)$ as $h\to +\infty$; this is where the proof differs from the one of the autonomous case. We write
\[F(y_h)=\underbrace{\int_{I\setminus\varphi_h(A_h)}\L(s, y_h, y_h')\Psi(s, y_h)\,ds}_{P_{1,h}} + \underbrace{ \int_{I\cap\varphi_h(A_h)}\L(s, y_h, y_h')\Psi(s, y_h)\,ds}_{P_{2,h}}. \]
\begin{itemize}[leftmargin=*]
\item[{\em a)}] {\em  Study of the convergence of $(P_{1,h})_h$.}
From the definition of $y_h$, recalling that $z_h=y$ and $\varphi_h'=1$ out of $A_h$, we get
\[P_{1,h}=\int_{I\setminus\varphi_h(A_h)}\L(s, y(\psi_h), y'(\psi_h))\Psi(s, y(\psi_h))\,ds.\]
The change of variable $\tau=\psi_h(s)$ and the fact that $\psi_h'=1$ on $I\setminus\varphi_h(A_h)$ give
\[P_{1,h}=\int_{\psi_h(I)\setminus A_h}\L(\varphi_h(\tau), y(\tau), y'(\tau))\Psi(\varphi_h(\tau), y(\tau))\,d\tau.\]
Almost everywhere in $I$ we have (omitting the variable $\tau$ in $y(\tau), y'(\tau)$)
\begin{equation}\label{tag:P1}\L(\varphi_h, y, y')=\big[\L(\varphi_h, y, y')-\L(\tau, y, y')\big]+\L(\tau, y, y').\end{equation}
Choose $h$ big enough in such a way that $\|\varphi_h-Id\|_{\infty}<\varepsilon_*$.
Condition (S) (with $K:=\|y\|_{\infty}$) implies that,  a.e. in $ I$,
\[|\L(\varphi_h, y, y')-\L(\tau, y, y')|\le \big(\kappa \L(\tau,y,y')+\beta|y'|^p+\gamma(\tau)\big)\,\|\varphi_h-Id\|_{\infty}.\]
Therefore, since $\L(\tau, y, y')\in L^1(I)$, from \eqref{tag:M}, for all $h\in\mathbb N$ big enough,
\begin{equation}\label{tag:P5} \int_{I\setminus\varphi_h(A_h)}\!\!\!\!\!|\L(\varphi_h, y, y')-\L(\tau, y, y')|\Psi(\varphi_h, y)\,d\tau\le C_y \|\varphi_h-Id\|_{\infty},\end{equation}
for a suitable constant $C_y$, possibly depending on $y$. It follows from \eqref{tag:P1}, \eqref{tag:P5} and \eqref{tag:M} that
\begin{equation}\label{tag:P2}P_{1,h}=\int_{\psi_h(I)\setminus A_h}\L(\tau, y, y')\Psi(\varphi_h, y)\,d\tau+ \varepsilon_h, \quad \varepsilon_h\to  0. \end{equation}
Now, since $\L(\tau, y,y')\in L^1(I)$, we deduce from \eqref{tag:M} that the functions \[g_h:=\L(\tau, y, y')\Psi(\varphi_h, y)\chi_{\psi_h(I)\setminus A_h}\] are in $L^1(I)$ and, in view of (${\rm C}_{y,\Psi}$), converge a.e. in $I$ to $\L(\tau, y, y')\Psi(\tau, y)$;
it follows from Lemma~\ref{lemma:2.6} that
\[\lim_{h\to \infty}P_{1,h}=\int_I\L(\tau, y(\tau), y'(\tau))\Psi(\tau, y(\tau))\,d\tau.\]
\item[{\em b)}] {\em Study of the convergence of $(P_{2,h})_h$.}
We know that for all $h$, $y_h(I)\subset\mathcal O_y$ and $|y'_h|\le  {\nu_0}$ a.e. in $\varphi_h(A_h)\cap I$. It follows from \eqref{tag:M} that
\[\text{For a.e. }\tau\in \varphi_h(A_h)\quad \L(\tau, y_h(\tau), y_h'(\tau))\Psi(\tau, y_h(\tau))\le M^2.\]
Therefore Step {\em vi)} implies that
\[P_{2,h}\le M^2|\varphi_h(A_h)|\to  0.\]
\end{itemize}
\end{itemize}
{\em Proof of Claim 2.}\\
We proceed as above, with some additional care on the definition of $\varphi_{h}$, since we need now  that $\displaystyle\int_t^T\varphi'_{h}(s)\,ds=T-t$. Referring to the proof of Claim 1:
\begin{itemize}
\item We proceed as in Steps {\em i), ii), iii)}.
\item[] {\em iv$'$)} We modify the above Step {\em iv)} as follows.
\begin{itemize}[leftmargin=*]
\item[$\bullet$] Since $y^{-1}(U_y)$ is open in $I$ and non empty, we may choose $\overline h$ in such a way that
\begin{equation}\label{tag:estiminvU}|y^{-1}(U_y)|> 10 |A_h|\qquad h\ge \overline h.\end{equation}
Since $y$ coincides with $z_{\overline h}$ out of $A_{\overline h}$ there is $\ell>0$ satisfying  $\|y'\|_{\infty}\le \ell$ on $I\setminus A_{\overline h}$.
\item[$\bullet$]
Notice that
\[\int_{A_h}\left[\left(\dfrac{|z_h'(s)|}{\nu_0} \vee 1\right)-1\right]\, ds\to 0.\]
Indeed,  it follows from \eqref{tag:zhestimate} that
\[\int_{A_h}\left[\left(\dfrac{|z_h'(s)|}{\nu_0} \vee 1\right)-1\right]\, ds
\le |\varphi_h(A_h)|\to 0.
\]
It follows from \eqref{tag:estiminvU} that,  for $h$ big enough, there is \[\Sigma_h\subset y^{-1}(U_y)\setminus (A_{h}\cup A_{\overline h})\] of sufficiently small measure, more precisely satisfying
    \[\dfrac{|\Sigma_h|}{2}=\int_{A_h}\left[\left(\dfrac{|z_h'(s)|}{\nu_0} \vee 1\right)-1\right]\, ds.\]
    \item[$\bullet$] For each $h$ big enough, define $\varphi_h$ as follows:
 \[\varphi_h(t)=t,\quad \varphi_h'=
 \begin{cases}1&\text{ in }I\setminus (A_h\cup \Sigma_h),\\
\dfrac{|z_h'|}{\nu_0} \vee 1&\text{ in } A_h,\\
1/2&\text{ in } \Sigma_h.
\end{cases}\]
\end{itemize}
\item[] {\em v$'$)} From {\em iv$'$)}  we now have
\[\begin{aligned}\int_t^T\varphi_h'(s)\,ds&=|I\setminus (A_h\cup \Sigma_h)|+\int_{A_h}\left(\dfrac{|z_h'(s)|}{\nu_0} \vee 1\right)\, ds+\dfrac{|\Sigma_h|}{2}\\
&=|I|-|A_h|-|\Sigma_h|+\left(\dfrac{|\Sigma_h|}{2}+|A_h|\right)+\dfrac{|\Sigma_h|}{2}=|I|.
\end{aligned}\]
Thus $\varphi_h:I\to I$ is bijective.
\item[{\em vii$'$)}] Let $\psi_h$ be the inverse of $\varphi_h$. Define $y_h:=z_h(\psi_h)$. Then $y_h$ is Lipschitz, $y_h(t)=y(t)$ and
     \[y_h'=z_h'(\psi_h)\psi_h'=
     \begin{cases}y'(\psi_h)&\text{ in }I\setminus \varphi_h(A_h\cup\Sigma_h),\\
 \left({\nu_0}\dfrac{z_h'(\psi_h)}{|z_h'(\psi_h)|}\right)\wedge z_h'(\psi_h) &\text{ in }\varphi_h( A_h),
 \\2z_h'(\psi_h)=2y'(\psi_h)&\text{ in }\varphi_h(\Sigma_h).
 \end{cases}\]
 As in Claim 1, it turns out that $(y_h)_h$ converges to $y$ in $W^{1,p}(I, \R^n)$. Indeed, it follows from the proof of Claim 1 that
 \[\int_{I\setminus \varphi_h(\Sigma_h)}|y_h'-y'|^p\,ds\to 0.\]
 It remains to prove that  $\|y_h'-y'\|_{L^p(\varphi_h(\Sigma_{h}))}\to 0$. Since $|y'|\le \ell $ on $\Sigma_h$ we have
 \[\begin{aligned}\int_{\varphi(\Sigma_h)}|y_h'-y'|^p\,ds&\le 2^p\int_{\varphi(\Sigma_h)}|y_h'|^p\, ds+2^p\int_{\varphi(\Sigma_h)}|y'(s)|^p\,ds\\
 &\le 2^p\int_{\varphi(\Sigma_h)}2^p|y'(\psi_h)|^p\, ds+2^p\int_{\varphi(\Sigma_h)}|y'(s)|^p\,ds\\
&\le 2^{2p}\ell^p |\varphi(\Sigma_h)|+2^p\int_{\varphi(\Sigma_h)}|y'(s)|^p\,ds\to 0,
\end{aligned}\]
  since $\varphi_h$ is absolutely continuous and  $|\varphi_h(\Sigma_h)|\to 0$, proving the claim.\\
  We may thus assume, as in the proof of Claim 1,  that $y_h(I)\subset \mathcal O_y$ for every $h$ big enough.
  \item[{\em viii$'$)}]
It remains to show that $(F(y_h))_h$ converges to $F(y)$ as $h\to +\infty$. We write
\[F(y_h)=\underbrace{\int_{I\setminus\varphi_h(A_h\cup \Sigma_h)}*\,ds}_{P_{1,h}} + \underbrace{ \int_{\varphi_h(A_h)}*\,ds}_{P_{2,h}}+ \underbrace{ \int_{\varphi_h(\Sigma_h)}*\,ds}_{P_{3,h}}, \]
where here $*$ stands for $\L(s, y_h, y_h')\Psi(s, y_h)$. As in the proof of Claim 1 we get
\[P_{1,h}\to F(y),\quad P_{2,h}\to 0.\]
It remains to prove that $P_{3,h}\to 0$. Since $\varphi_h'\equiv \dfrac12$ on $\Sigma_h$, the change of variables $s=\varphi_{\nu}(\tau)$ gives
\[\begin{aligned}P_{3,h}&=\int_{\varphi_h(\Sigma_h)}\L(s, y_h, y_h')\Psi(s, y_h)\, ds\\
&=\int_{\varphi_h(\Sigma_h)}\L\left(s, y(\psi_h), y'(\psi_h)\psi_h'\right)\Psi(s, y(\psi_h))\,ds\\
&=\dfrac12\int_{\Sigma_h}\L\left(\varphi_h, y, 2{y'}\right)\Psi(\varphi_h, y)\,d\tau
\end{aligned}\]
Now, for a.e. $\tau$ on $\Sigma_h$, $y(\tau)\in U_y$ and $2|y'(\tau)|\le 2\ell$. Hypotheses (U$_{y, \L}$) and (B$_{y, \Psi}$) imply that $\L(\varphi_h, y, 2y')\Psi(\varphi_h, y)$ is bounded on $\Sigma_h$: the conclusion follows.
\end{itemize}
\end{proof}
\begin{remark} Notice that in the proof of Claim 1 of Theorem~\ref{thm:Lav1}, every element of the sequence $(z_h)_h$ satisfies the boundary conditions $z_h(t)=y(t), z_h(T)=y(T)$. However, since $\varphi_{h}'\ge 1$ on $[t, T]$, it may happen that $\varphi_h(T)>T$, and this  yields $\psi_h(T)<T$; as a consequence $y_h(T)=z_h(\psi_h(T))$ may differ from $z_h(T)=y(T)$.\\ In presence of a final end point condition of the form $y(T)=Y$, instead of a initial one, the proof goes as above, replacing the definition of the change of variables $\varphi_{h}$ by
 \[\varphi_h(T)=T,\quad \varphi_h'=
 \begin{cases}1&\text{ in }I\setminus A_h,\\
\dfrac{|z_h'|}{\nu_0} \vee 1&\text{ in } A_h.\end{cases}\]
\end{remark}

\section*{Acknowledgments}
I warmly thank Giovanni Alberti for the mail exchange we had during the preparation of the paper and for providing the construction of Example~\ref{ex:alberti} and for sharing the conjecture of Condition (U$_{y, \L}$) in Theorem~\ref{thm:Lav1}.
I am grateful to Giulia Treu for her comments on the manuscript  and warm encouragement.
This research is partially supported by the  Padua University grant SID 2018 ``Controllability, stabilizability and infimum gaps for control systems'', prot. BIRD 187147 and has been accomplished within the UMI Group TAA ``Approximation Theory and Applications''.
\bibliographystyle{plain}
\bibliography{Lavrentiev1dim2}
\end{document}